\newtheorem{remark}[theorem]{Remark}
\newtheorem{example}[theorem]{Example}
\newtheorem{summary}[theorem]{Summary}
\newcommand{\C}{\mathbb C}
\newcommand{\bx}{x}
\newcommand{\bz}{z}
\newcommand{\zero}{0}
\newcommand{\smath}[1]{{\mbox{\scriptsize $#1$}}}
\font\tenex=cmex10 
\newdimen\p@renwd
\def\bmat#1{\begingroup \m@th
  \setbox\z@\vbox{\def\cr{\crcr\noalign{\kern2\p@\global\let\cr\endline}}%
    \ialign{$##$\hfil\kern2\p@\kern\p@renwd&\thinspace\hfil$##$\hfil
      &&\quad\hfil$##$\hfil\crcr
      \omit\strut\hfil\crcr\noalign{\kern-\baselineskip}%
      #1\crcr\omit\strut\cr}}%
  \setbox\tw@\vbox{\unvcopy\z@\global\setbox\@ne\lastbox}%
  \setbox\tw@\hbox{\unhbox\@ne\unskip\global\setbox\@ne\lastbox}%
  \setbox\tw@\hbox{$\kern\wd\@ne\kern-\p@renwd\left[\kern-\wd\@ne
    \global\setbox\@ne\vbox{\box\@ne\kern2\p@}%
    \vcenter{\kern-\ht\@ne\unvbox\z@\kern-\baselineskip}\,\right]$}%
  \null\;\vbox{\kern\ht\@ne\box\tw@}\endgroup}
\author{Michiel~E.~Hochstenbach\thanks{%
Version \today.
Department of Mathematics and Computer Science,
TU Eindhoven,
PO Box 513, 5600 MB, The Netherlands,
{\tt www.win.tue.nl/$\sim$hochsten}.
This author has been supported by an NWO Vidi research grant.}
\and
Christian Mehl\thanks{%
Institut f\"{u}r Mathematik, Technische Universit\"at Berlin, Sekretariat MA 4-5,
Stra\ss e des 17.~Juni 136, 10623 Berlin, Germany,
{\tt mehl@math.tu-berlin.de}.
This author has been supported by a Dutch 4TU AMI visitor's grant.}
\and
Bor~Plestenjak\thanks{%
IMFM and Faculty of Mathematics and Physics, University of Ljubljana,
Jadranska 19, SI-1000 Ljubljana, Slovenia,
{\tt bor.plestenjak@fmf.uni-lj.si}.
This author
has been supported in part by the Slovenian Research Agency (grant P1-0294).}}
\title{Solving singular generalized eigenvalue problems \\
by a rank-completing perturbation}
\begin{document}
\maketitle

\begin{abstract}
Generalized eigenvalue problems involving a singular pencil are very challenging
to solve, both with respect to accuracy and efficiency.
The existing package Guptri is very elegant but may be time-demanding, even for
small and medium-sized matrices.
We propose a simple method to compute the eigenvalues of singular pencils,
based on one perturbation of the original problem of a certain specific rank.
For many problems, the method is both fast and robust.
This approach may be seen as a welcome alternative to staircase methods.
\end{abstract}

\begin{keywords}
Singular pencil, singular generalized eigenvalue problem, rank-completing perturbation,
Guptri, model updating, double eigenvalues, two-parameter eigenvalue problem,
differential algebraic equations, quadratic two-parameter eigenvalue problem.
\end{keywords}

\begin{AMS}
65F15, 15A18, 15A22, 
15A21, 47A55, 
65F22 
\end{AMS}

\pagestyle{myheadings}
\thispagestyle{plain}
\markboth{HOCHSTENBACH, MEHL, AND PLESTENJAK}{SOLVING SINGULAR GEPs BY A RANK-COMPLETING PERTURBATION}

\section{Introduction}
\label{sec1}
We study the computation of eigenvalues of small to medium-sized matrix pencils
$A-\lambda B$, where $A$ and $B$ are (real or complex) $n\times m$ matrices such that
the matrix pencil $A-\lambda B$ is singular, which means that $m\neq n$, or
if $m=n$ then
\[
\det(A-\lambda B) \equiv 0.
\]
In these cases, the common definition of eigenvalues as roots of $\det(A-\lambda B)$ would
only be meaningful for the case $m=n$, but turns out to be useless as any value $\lambda\in\mathbb C$
would be an eigenvalue.
Therefore, \emph{finite eigenvalues} of a singular matrix pencil
$A-\lambda B$ are typically defined as values $\lambda_0\in\mathbb C$ satisfying
$\text{rank}(A-\lambda_0 B) < \text{nrank}(A,B)$, where
\[
\text{nrank}(A,B) := \max_{\zeta \in \C} \, \text{rank}(A-\zeta B)
\]
denotes the \emph{normal rank} of the pencil $A-\lambda B$; see \cite{Dem00}. Similarly, we say that
$\infty$ is an eigenvalue of the singular pencil $A-\lambda B$ if
$\text{rank}(B)<\text{nrank}(A,B)$. In the following we will mainly restrict ourselves to the case $m=n$ as
the case $m\neq n$ can easily be reduced to the square case by adding an appropriate number of
zero rows or columns.

The singular generalized eigenvalue problem (singular GEP) is well known to be ill-conditioned as
arbitrarily small perturbation may cause drastic changes in the eigenvalues. A classical
example is given by the pencils
\[
A-\lambda B=\left[\begin{array}{cc}1&0\\ 0&0\end{array}\right]
-\lambda\left[\begin{array}{cc}1&0\\ 0&0\end{array}\right]\ \ \mbox{and}\ \
\widetilde A-\lambda\widetilde B=
\left[\begin{array}{cc}1&\varepsilon_1\\ \varepsilon_2&0\end{array}\right]
-\lambda\left[\begin{array}{cc}1&\varepsilon_3\\ \varepsilon_4&0\end{array}\right],
\]
where $\varepsilon_1, \dots,\varepsilon_4\in\mathbb C\setminus\{0\}$; see \cite{Kag00}.
While $A-\lambda B$ is singular and has only the eigenvalue $1$, the perturbed pencil $\widetilde A-\lambda\widetilde B$
is regular and has the eigenvalues $\frac{\varepsilon_1}{\varepsilon_3}$ and
$\frac{\varepsilon_2}{\varepsilon_4}$ that can be anywhere in the complex plane even for tiny absolute values
of $\varepsilon_1, \dots,\varepsilon_4$.

On the other hand, it was observed in \cite{Wil79} that a situation as above is exceptional
and that generically small perturbations of a singular square pencil make the pencil regular and
some of the eigenvalues of the perturbed pencil are very close to the original eigenvalues of the
singular pencil. The following example illustrates this. The Matlab commands
\begin{verbatim}
    A = diag([1 2 3 0 0 0]);
    B = diag([2 3 4 0 0 0]);
    eig(U'*A*V, U'*B*V)
\end{verbatim}
where $U$ and $V$ are certain random $6 \times 6$ orthogonal matrices, compute the following eigenvalues:
\begin{verbatim}
    0.5000  0.6667  0.7500  0.1595  0.6756  0.6543
\end{verbatim}
We see that the three (finite) eigenvalues of the regular part are correct.
Following the terminology of \cite{VD79}, the other three values are ``fake eigenvalues''
and correspond to the singular part of the pencil.
(Explicit error analysis for the eigenvalues of singular pencils
has been undertaken in \cite{DDM08,DKa87}.) Despite this observation, Van Dooren suggests in
\cite{VD79} to solve the singular generalized eigenvalue problem by first extracting the
regular part and then use the QZ algorithm on that part.
Wilkinson strongly supports this recommendation in \cite{Wil79}.

A robust software package which follows Van Dooren's approach
is Guptri~\cite{DKa93, Guptri}. For a singular pencil, first a ``staircase'' algorithm is applied
to deflate the singular part of the pencil, and then the QZ algorithm is used to compute the
eigenvalues of the remaining regular part. While the results of Guptri are usually excellent,
this method may be quite time-consuming; for instance, applying Guptri
on a singular $300 \times 300$ pencil on our machine took over 20 seconds, while Matlab's
{\sf eig} on a random pencil of the same size spent less than a
second.\footnote{We note that this experiment has been performed some years ago.
A current practical issue is that there is no publicly available 64-bit Guptri code.}
Another issue is the fact that staircase type methods such as Guptri need rank decisions.
If the pencil has a minimal index of size $\eta$ (see Section~\ref{sec:prelim} for more details),
then at least $\eta+1$ such decisions have to be taken.
Typically, these decisions tend to become more and more critical during a
run of the staircase algorithm. See, e.g., \cite{EM} or \cite[Ex.~18]{MPl14}, where a
variant of the staircase algorithm for the singular two-parameter eigenvalue problem,
introduced in \cite{MPl09}, fails in double precision but gets the right result in higher precision.

Another way of extracting the regular part using
fewer rank decisions has been suggested in \cite{Meh18}.
One may view the singular pencil as a constant coefficient differential-algebraic
equation and perform a regularization procedure with the help of a derivative array
as described in \cite{BenLMV15}. In this way, the regular part of the pencil can be extracted
by only three nullspace computations. However, the
derivative array approach leads to an inflation of the system by a factor of at least $\eta+1$,
where $\eta$ is the largest minimal index of the given pencil, and may thus result in
high computational costs.

We propose a new method to compute the eigenvalues
of a singular pencil. The method is based on considering perturbations of rank
\[
k = n - \text{nrank}(A,B)
\]
which we will call \emph{rank-completing perturbations} as the rank is exactly large
enough to generically turn the pencil into a pencil of full normal rank. As we
will show, the canonical form of the original regular part of the given
pencil stays invariant under generic rank-completing perturbations.

The idea of computing eigenvalues of singular pencils with rank-completing perturbations
is not completely new, and the following specific type has been used in system theory as early as in the 70s
(without the use of the terminology ``rank-completing perturbation''). If a linear time-invariant control system of the form
\begin{align*}
\dot x&=Ax+Bu,\\
y&=Cx+Du
\end{align*}
is given, where $A\in\mathbb R^{n,n}$, $B\in\mathbb R^{n,m}$, $C\in\mathbb R^{r,n}$, and $D\in\mathbb R^{r,m}$
are the system matrices, $x$ stands for the state of the system, $u$ is the input, and
$y$ is the output, then the eigenvalues of the system pencil
\[
S(\lambda)=\left[\begin{array}{cc}\lambda I-A&B\\ -C&D\end{array}\right]
\]
are of particular interest in control theory; see \cite{EmaVD82} and the references therein.
(If the system is minimal, then these eigenvalues are also referred to as \emph{transmission zeros}
of the system.) Clearly, if $m\neq r$, then the pencil $S(\lambda)$ is rectangular and thus singular.
For that case and under the additional assumptions $r<m$ and $\operatorname{nrank}S(\lambda)=n+r$,
the following algorithm based on ideas of \cite{DavW74} has been proposed in \cite{LauM78}
for the computation of the transmission zeros:
\medskip

\begin{tabular}{ll}
{\footnotesize 1:} & Select random matrices $[E_1 \ \, F_1]$, $[E_2 \ \, F_2]\in\mathbb
R^{m-r,n+m}$ so that\\
& $S_i(\lambda):=\left[\begin{array}{cc}\lambda I-A&B\\ -C&D\\ E_i&F_i\end{array}\right]$\\
& is regular for $i=1,2$.
\\
{\footnotesize 2:} & Compute the eigenvalues ${\mathcal E}_i$ of $S_i(\lambda)$ for $i=1,2$. \\
{\footnotesize 3:} & Compute the intersection
${\mathcal E} = {\mathcal E}_1 \cap {\mathcal E}_2$.
\end{tabular}
\medskip

Since for each eigenvalue $\lambda_0$ of $S(\lambda)$
we have 
$\operatorname{rank}S_i(\lambda_0)<n+m$, it immediately follows that
the eigenvalues of $S(\lambda)$ are contained in the spectrum of $S_i(\lambda)$ for both $i=1,2$.
The extended matrix pencils will
give rise to two sets of fake eigenvalues. As generically these sets will be
disjoint if the applied perturbations are generated randomly, it follows that
the set $\mathcal E$ will generically coincide with the set of eigenvalues of
$S(\lambda)$.

However, as pointed out in \cite{EmaVD82}, this method may encounter difficulties
in distinguishing the finite zeros from the infinite ones, in particular if the latter
occur with a high multiplicity.
Another problem may occur in identifying the values that belong to the intersection $\mathcal E$.
Although the original eigenvalues of the pencil theoretically coincide with a
subset of both ${\mathcal E}_1$ and ${\mathcal E}_2$,
they may still differ slightly in practice due to finite precision arithmetic.
Therefore, a tolerance has to be prescribed that decides when two values are considered
to be equal. If this tolerance is chosen too small, then some of the eigenvalues
may be missed. If, on the other hand, the tolerance is set too large, then
two close fake eigenvalues of $S_1(\lambda)$ and $S_2(\lambda)$ may be falsely identified as an eigenvalue
of $S(\lambda)$.

In this paper, we show that the eigenvalues of a singular pencil can be
efficiently computed with the help of {\em just one} rank-completing perturbation of the form
\[
A-\lambda B+\tau \, U(D_A-\lambda D_B)V^*,
\]
where $U$, $V$ are $n \times k$ matrices with orthonormal columns,
$D_A,D_B$ are diagonal $k\times k$ matrices,
and $\tau$ is a nonzero scalar. The orthonormality of the columns is not strictly necessary,
but convenient, for instance since in this case the norm of the perturbation can
easily be controlled by the parameter $\tau$.
The problem of identifying the subset of eigenvalues of the original pencil among the computed
eigenvalues of the perturbed pencil is then taken care of by the \emph{key observation}
that the left and right eigenvectors that correspond to the true eigenvalues satisfy orthogonality
relations with respect to the matrices $U$ and $V$.
Thus, instead of comparing the spectra
of two different pencils, the true eigenvalues can be separated from the fake
eigenvalues by using information from the corresponding left and right eigenvectors
from only one perturbed pencil.
We note that perturbations of singular matrix pencils have already been considered
in \cite{BHM98, DDM08, MMW15, Val12,Val13},
but it seems that a detailed investigation of rank-completing perturbations is new,
except for \cite{MehMW16}, where the case of singular Hermitian pencils of normal rank $n-1$ was
considered.


The rest of this paper is organized as follows.
After some preliminaries in Section~\ref{sec:prelim},
we review some motivating applications where one is interested in computing
eigenvalues of a singular matrix pencil in Section~\ref{sec:appl}.
The main theoretical results are presented in
Section~\ref{sec:rankcomplete}, while the numerical method based
on these results is introduced in Section~\ref{sec:method}, followed
by some numerical experiments in Section~\ref{sec:num}. In Section~\ref{sec:2EP} we discuss
singular two-parameter eigenvalue problems and present a new numerical method for
such problems. We summarize some conclusions in Section~\ref{sec:concl}.

\section{Preliminaries}
\label{sec:prelim}
We will interpret matrix pencils both as pairs of
matrices $(A,B)\in\mathbb C^{n,m}\times \mathbb C^{n,m}$ or as $n\times m$ matrix polynomials
$A-\lambda B$ of degree at most one and we will switch between these notations whenever useful.
An important tool in the theory of singular pencils is the
Kronecker canonical form (KCF) of a pencil $A-\lambda B$; see, e.g., \cite{Gan59}.

\begin{theorem}[Kronecker canonical form]\label{thm:knf}
Let $A-\lambda B$ be a complex $n\times m$ matrix pencil. Then there exist nonsingular matrices
$P\in\mathbb C^{n,n}$ and $Q\in\mathbb C^{m,m}$ such that
\begin{equation}\label{eq:knf}
P(A-\lambda B)Q=\left[\begin{array}{cc} R(\lambda)&0\\ 0 & S(\lambda)\end{array}\right], \qquad
R(\lambda)=\left[\begin{array}{cc}J-\lambda I_r&0\\ 0&I_s-\lambda N\end{array}\right]
\end{equation}
with $J$, $N$ in Jordan normal form and in addition $N$ being nilpotent, and
\[
S(\lambda)=
\text{diag}\big(L_{m_1}(\lambda),\dots,L_{m_k}(\lambda),\, L_{n_{1}}(\lambda)^\top,\dots,L_{n_{\ell}}(\lambda)^\top\big),
\]
where
$L_{j}(\lambda)=[0 \ \, I_{j}]-\lambda \, [I_{j} \ \, 0]$
is of size $j\times (j+1)$, and $m_i\ge 0$ for $i=1,\dots,k$,
and $n_i\ge 0$ for $i=1,\dots,\ell$.
\end{theorem}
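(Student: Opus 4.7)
The plan is to reduce the statement to the regular case by successively extracting singular Kronecker blocks $L_{m_i}(\lambda)$ and $L_{n_j}(\lambda)^\top$, and then invoke the Weierstrass canonical form for what remains.

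First I would extract the right-singular blocks using minimal indices. Consider the right nullspace of $A-\lambda B$ over the field $\mathbb C(\lambda)$, and let $\epsilon$ be the smallest degree of a nonzero polynomial vector $x(\lambda) = x_0 + \lambda x_1 + \cdots + \lambda^\epsilon x_\epsilon$ satisfying $(A-\lambda B)x(\lambda) = 0$. Comparing coefficients of $\lambda^i$ yields $A x_0 = 0$, $B x_\epsilon = 0$, and $A x_i = B x_{i-1}$ for $i=1,\ldots,\epsilon$. The key linear-algebra lemma is that $x_0,\ldots,x_\epsilon$ are linearly independent in $\mathbb C^m$ and $Bx_0,\ldots,Bx_{\epsilon-1}$ are linearly independent in $\mathbb C^n$: any nontrivial relation can be used to manufacture a polynomial nullvector of strictly smaller degree, contradicting minimality of $\epsilon$. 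Extending these two sets to bases of $\mathbb C^m$ and $\mathbb C^n$ supplies transformation matrices that put the pencil into block form with an $L_\epsilon(\lambda)$ block in the upper-left corner and a complementary pencil $A'-\lambda B'$ in the lower-right, together with some coupling blocks.

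Next I would annihilate the off-diagonal coupling blocks by a further change of basis. This is the technical heart of the argument: one has to solve a Sylvester-like matrix equation, and solvability follows from minimality of $\epsilon$ (no polynomial vector of degree less than $\epsilon$ lies in the kernel of the full pencil, hence the obstruction space vanishes). Having cleanly split off one $L_{m_i}(\lambda)$ block, I would iterate on the smaller complementary pencil $A'-\lambda B'$, whose minimal indices form a subset of the original list. After all right-singular blocks are exhausted, I would transpose and repeat on $(A-\lambda B)^\top$; its right-singular blocks are precisely the transposes $L_{n_j}(\lambda)^\top$ of the original left-singular blocks, so this produces the full singular part $S(\lambda)$.

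At this stage the remaining pencil is square with full normal rank, i.e.\ regular. I would then invoke the Weierstrass canonical form for regular pencils to obtain $R(\lambda)$: decompose via the generalized eigenspace of $A$ with respect to $B$ to separate the finite spectrum from the infinite one, place the finite part in Jordan normal form yielding the $J-\lambda I_r$ block, and apply the duality $\lambda \leftrightarrow 1/\mu$ followed by a second Jordan reduction to obtain the $I_s - \lambda N$ block with $N$ nilpotent.

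The main obstacle will be the clean splitting step. Proving the linear independence of $\{x_i\}$ and $\{Bx_i\}$ is routine, but the subsequent elimination of the coupling blocks requires a careful Sylvester-type argument and some nontrivial bookkeeping to guarantee that the complementary pencil inherits no spurious low-degree nullvectors; this is where minimality of $\epsilon$ is used in an essential way and is what makes the induction on the size of the pencil (and on $\epsilon$) actually close up.
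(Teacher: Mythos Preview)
The paper does not give its own proof of this theorem; it is quoted as the classical Kronecker canonical form with a reference to Gantmacher~\cite{Gan59}. Your outline is essentially Gantmacher's original argument (extract a singular block via a minimal-degree polynomial nullvector, decouple, iterate, then apply Weierstrass to the regular remainder), so it is both correct in strategy and in agreement with the source the paper cites.
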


The pencils $R(\lambda)$ and $S(\lambda)$ in Theorem~\ref{thm:knf} are called the
\emph{regular} and the \emph{singular part} of $A-\lambda B$, respectively. The eigenvalues of $J$ are exactly the
finite eigenvalues of $A-\lambda B$, while the eigenvalue $0$ of $N$ corresponds to the infinite eigenvalue
of $A-\lambda B$. The parameters $m_1,\dots,m_k$ and
$n_{1},\dots,n_{\ell}$ are called the \emph{right} and the \emph{left minimal indices} of $A-\lambda B$, respectively.
One may easily check that the normal rank
$\text{nrank}(A,B)$
is equal to
$\min(n-\ell,m-k)$. In the remainder of the paper, we will consider square pencils $A-\lambda B$, i.e.,
we have $n=m$. Note that this implies $k=\ell$, i.e., we must have
the same number of right and left minimal indices. However, the particular
values of the left and right minimal indices may be distinct.

In contrast to the eigenvalues of singular pencils, the corresponding eigenvectors and deflating subspaces
are not well defined. To understand why, we consider the following example borrowed from \cite{MehMW18}
(and slightly adapted). The pencil
\begin{equation}\label{exdefl}
A-\lambda B=\left[\begin{array}{ccc}1-\lambda& \phantom-0&0\\ 0&-\lambda&1\\ 0& \phantom-0&0\end{array}\right]
\end{equation}
obviously has the regular part $R(\lambda)=[1-\lambda]$ and thus the pencil $A-\lambda B$ has
the single eigenvalue $\lambda_0=1$ with algebraic multiplicity one. Nevertheless, any vector of the
form $x(\alpha,\beta):=[\alpha \ \, \beta \ \, \beta]^\top$ with $\alpha,\beta\in\mathbb C$ satisfies
$Ax=\lambda_0 Bx$
and thus could be interpreted as an eigenvector of the pencil. One may argue that 
the choice $\alpha\neq 0$ and $\beta=0$ seems to be canonical and gives 
a unique one-dimensional deflating
subspace ``corresponding'' to the regular part of the pencil. But on the other hand it follows from
the equality
\[
\left[\begin{array}{ccc} \phantom-1/\alpha & 0 &0\\ -\beta/\alpha & 1&0\\ \phantom-0&0&1 \end{array}\right]\left[\begin{array}{crc}1-\lambda&0&0\\ 0&-\lambda&1\\ 0&0&0\end{array}\right]
\left[\begin{array}{ccc} \alpha & 0 &0 \\ \beta & 1 & 0\\ \beta &0 & 1\end{array}\right]=\left[\begin{array}{crc}1-\lambda&0&0\\ 0&-\lambda&1\\ 0&0&0\end{array}\right]
\]
that for any choice of $\alpha,\beta$ with $\alpha\neq 0$, the vector $x(\alpha,\beta)$ can be
used to extract the regular part of the pencil as well (and thus could also be considered as ``corresponding'' to the
regular part). For this reason, we restrict ourselves to the computation
of eigenvalues of singular pencils, but do not consider corresponding eigenvectors.
However, as we will see in Section~\ref{sec:rankcomplete}, the eigenvectors of the
{\em perturbed} pencil play a key role in our approach.

Instead of eigenvectors and deflating subspaces, the concept of \emph{reducing subspaces} introduced in
\cite{VD83} is more adequate in the case of singular pencils. We say that a subspace ${\cal M}$ is a \emph{reducing subspace}
for the pencil $A-\lambda B$ if $\dim(A{\cal M}+B{\cal M})=\dim({\cal M})-k$,
where $k$ is the number of right singular blocks. In the example above, the reducing subspace
associated with the eigenvalue $\lambda_0=1$ is exactly given by all vectors $x(\alpha,\beta)$ with $\alpha,\beta\in\mathbb C$.
The \emph{minimal reducing subspace} ${\cal M}_{\text{RS}}(A,B)$ is the intersection of all reducing subspaces.
It is unique and can be numerically computed in a stable way from the generalized upper
triangular form (Guptri); see, e.g., \cite{DKa93}. Guptri exists for every pencil $A-\lambda B$ and has the form
\[P^*(A- \lambda B)Q=
\left[\begin{array}{ccc} A_{\rm r}-\lambda B_{\rm r} & \times & \times \\
0 & A_{\rm reg}-\lambda B_{\rm reg} & \times \\
0 & 0 & A_l-\lambda B_l\end{array}\right],
\]
where the matrices $P$ and $Q$ are unitary,
$A_{\rm r}-\lambda B_{\rm r}$ has only right singular blocks in its KCF,
$A_l-\lambda B_l$ has only left singular blocks in its KCF,
and $A_{\rm reg}-\lambda B_{\rm reg}$ has only regular blocks in its KCF.
We will briefly come back to a minimal reducing subspace in Section~\ref{sub:sing2EP}.

\section{Motivation and applications}
\label{sec:appl}
Before proposing our new method, we first review some motivating applications
where one is interested in computing the eigenvalues of a singular pencil,
besides the computation of transmission zeros already mentioned in the introduction.

\subsection{Differential algebraic equations and descriptor systems}

\label{sec:dae}
Linear differential algebraic equations (DAEs) with constant coefficients have the general
form
\[
E\dot x=Ax+f(t),\quad x(t_0)=x_0,
\]
where $E,A\in\mathbb R^{k,n}$, $t_0\in\mathbb R$, $x_0\in\mathbb R^n$, and $f:[t_0,\infty)\to\mathbb R^k$
is a given inhomogeneity; see \cite{KunM06}.
Linear time-invariant descriptor systems consist of a DAE combined with a system
input and output and take the form
\begin{align*}
E\dot x & = Ax+Bu,\quad x(t_0)=x_0,\\
y & = Cx+Du,
\end{align*}
where, in addition, $B\in\mathbb R^{k,m}$, $C\in\mathbb R^{p,n}$, $D\in\mathbb R^{p,m}$. Here,
$x:[t_0,\infty)\to\mathbb R^n$ stands for the state of the descriptor system, $u:[t_0,\infty)\to\mathbb R^m$ is the input
and $y:[t_0,\infty)\to\mathbb R^p$ the output. As highlighted in \cite{BenLMV15},
the problem may be well-posed even if the underlying pencil $A-\lambda E$ is singular. Indeed, even in
the singular case the corresponding DAE may have a  solution (even unique) for particular inhomogeneities
$f$
and special initial conditions $(t_0,x_0)$.

\subsection{Double eigenvalue problem}
\label{sec:double}
Given two $n\times n$ matrices $A$ and $B$, we are interested in all values $\lambda$ such
that $A + \lambda B$ has a double eigenvalue.
For the generic case of a double eigenvalue,
we look for independent vectors $x$ and $y$ such that
\begin{align*}
(A + \lambda B - \mu I) \, x & = 0, \\
(A + \lambda B - \mu I)^2 \, y & = 0,
\end{align*}
for some $\mu$.
This application is discussed in \cite{MPl14}, together with a staircase
algorithm to solve it; see also \cite{JKM11}. In the generic case, the problem has $n(n-1)$ solutions.

In \cite{MPl14}, the problem is solved by a linearization of the second equation
and followed by the solution of the obtained singular two-parameter eigenvalue problem.
The wanted values $\lambda$ are eigenvalues of
the singular pencil $\Delta_1-\lambda \Delta_0$ of size $3n^2\times 3n^2$, where
\[
\Delta_1 = A\otimes R - I\otimes P,\quad \Delta_0 = B\otimes R - I\otimes Q,
\]
for
\[P=\left[\begin{array}{ccc}A^2 & AB+BA & -2A \\0 & I & 0\\0 & 0 & I\end{array}\right],\
  Q=\left[\begin{array}{ccc}\phantom-0 & B^2 & -B \\-I & 0 & \phantom-0\\\phantom-0 & 0 & \phantom-0\end{array}\right],\
  R=\left[\begin{array}{ccc}\phantom-0 & -B & I \\\phantom-0 & \phantom-0 & 0\\-I & \phantom-0 & 0\end{array}\right].
\]
Generically, the pencil $\Delta_1-\lambda\Delta_0$ will have normal rank $3n^2-n$.
For more details, see \cite{MPl14} as well as \cite{HMP12} for other possible
linearizations.

\subsection{Singular two-parameter eigenvalue problems}\label{sub:sing2EP}
In the singular two-parameter eigenvalue problem we are given a pair of singular pencils
$\Delta_1-\lambda \Delta_0$ and $\Delta_2-\mu \Delta_0$ (see also \eqref{D012} and \eqref{Dgep}),
and the goal is to find \emph{finite regular eigenvalues} $(\lambda_0,\mu_0)$, where
(see, e.g., \cite{MPl09} for more details):
\begin{itemize}
\item $\lambda_0$ is a finite eigenvalue of $\Delta_1-\lambda \Delta_0$,
\item $\mu_0$ is a finite eigenvalue of $\Delta_2-\mu \Delta_0$,
\item and there exists a nonzero vector $z$ such that
$(\Delta_1-\lambda_0\Delta_0)z=0$, $(\Delta_2-\mu_0\Delta_0)z=0$, and
$z\not\in {\cal M}_{\text{RS}}(\Delta_i,\Delta_0)$ for $i=1,2$.
\end{itemize}
This problem requires more than just solving one singular GEP.
We discuss it in more details in Section~\ref{sec:2EP} and present a new numerical method for its solution.


\section{Rank-completing perturbations of singular pencils}
\label{sec:rankcomplete}
Let $A-\lambda B$ be a singular pencil, where $A,B\in\mathbb C^{n,n}$
and $\textrm{nrank}(A,B)=n-k$. In this section
we investigate the effect of \emph{rank-completing perturbations}, i.e., rank-$k$
generic perturbations of the form
\begin{equation}
\label{pert1awithtau}
\widetilde A-\lambda \widetilde B := A-\lambda B+\tau \, (UD_AV^*-\lambda \, UD_BV^*),
\end{equation}
where $D_A,D_B\in\mathbb C^{k,k}$ are diagonal matrices such that $D_A-\lambda D_B$ is a regular
pencil, $U,V\in \mathbb C^{n,k}$ have full column rank, and $\tau\in\mathbb C$ is nonzero.
We investigate
the dependence of eigenvalues and eigenvectors of the perturbed pencils on $\tau$.

Above and in the following, the
term generic is understood in the following sense. A set $\mathcal A\subseteq\mathbb C^m$ is called \emph{algebraic}
if it is the set of common zeros of finitely many complex polynomials in $m$ variables, and $\mathcal A$ is
called \emph{proper} if $\mathcal A\neq\mathbb C^m$. A set $\Omega\subseteq\mathbb C^m$ is called
\emph{generic} if its complement is contained in a proper algebraic set.
In this sense we say that a property $\mathcal P$ holds generically with respect to the entries
of $U$ and $V$,
if there exists a generic set $\Omega\subseteq(\mathbb C^{n,k})^2$
(where we interpret $(\mathbb C^{n,k})^2$
as $\mathbb C^{2nk}$) such that $\mathcal P$ holds for all pencils of the form~\eqref{pert1awithtau} with
$(U,V)\in\Omega$.

\begin{remark}\rm Perturbations of the form~\eqref{pert1awithtau} are not the most general perturbations of
rank $k$. Indeed, completing $U$ and $V$ to nonsingular matrices $P=[U \ \, \widetilde U]$
and $Q=[V \ \, \widetilde V]$, we obtain that
\[
P^{-1}(UD_AV^*-\lambda\,UD_BV^*)(Q^*)^{-1}=\left[\begin{array}{cc}D_A-\lambda D_B&0\\ 0&0\end{array}\right],
\]
which means that the perturbation pencil $UD_AV^*-\lambda\,UD_BV^*$ has the regular part $D_A-\lambda D_B$ of
size $k\times k$, while, generically, a matrix pencil of nrank $k<n$ would have no regular part \cite{DD08}.
Thus, more generally one could consider perturbations of the form
\begin{equation}
\label{pert2a}
(A + U_1V^*_1+V_2U_2^*, \ B + U_1W^*_1+W_2U_2^*),
\end{equation}
where
$U_1,V_1,W_1\in\mathbb C^{n,\ell}$ and $U_2,V_2,W_2\in\mathbb C^{n,k-\ell}$ have full column rank for
$\ell\in\{0,1,\dots,k\}$.
However, we will restrict ourselves to perturbations of the form~\eqref{pert1awithtau}, because of their
favorable properties.
\end{remark}

Generically, a rank completing perturbation \eqref{pert1awithtau} of $A-\lambda B$ will result in a regular perturbed pencil
$\widetilde A-\lambda\widetilde B$.
We will show in the following, that 
generically the KCF of $\widetilde A-\lambda\widetilde B$ is
for all $\tau\ne 0$ given by
\[
\left[\begin{array}{ccc}R_{\text{reg}}(\lambda) &0&0\\
0& R_{\text{pre}}(\lambda)&0\\ 0&0& R_{\text{ran}}(\lambda)\end{array}\right],
\]
where $R_{\text{reg}}(\lambda)$ is the regular part of the original pencil $A-\lambda B$,
$R_{\text{pre}}(\lambda)=D_A-\lambda D_B$ and $R_{\text{ran}}(\lambda)$ only has simple
eigenvalues that are different from the eigenvalues of $R_{\text{reg}}(\lambda)$ and $R_{\text{pre}}(\lambda)$.
Thus, the eigenvalues of $\widetilde A-\lambda\widetilde B$ are exactly the (say) $p$ eigenvalues
of the original pencil $A-\lambda B$ (counted with multiplicities) and $n-p$ newly generated eigenvalues which consist
of $k$ ``prescribed'' eigenvalues which are the eigenvalues of the perturbation pencil $U(D_A-\lambda D_B)V^*$,
and $n-p-k$ ``random'' eigenvalues that are gathered in the part $R_{\text{ran}}(\lambda)$.

We start by showing that under a rank-completing perturbation, the regular part of $A-\lambda B$
will stay invariant in the above sense.
Although our main focus are square pencils, we will state some results in more generality covering also
the case of rectangular pencils. The following proposition is a generalization of \cite[Thm.~4.2]{MehMW16}
(which deals with Hermitian pencils) to a block case without any specific structure in $A$ and $B$.

\begin{proposition}\label{prop:genpert}
Let $A-\lambda B$ be an $n\times m$ singular matrix pencil having at least $k$ left minimal indices,
and let $U\in\mathbb C^{n,k}$. Then generically (with respect to the
entries of $U$) there exist nonsingular matrices $P,Q$ such that
\[
P(A-\lambda B)Q=\left[\begin{array}{cc}R(\lambda)&0\\ 0& S(\lambda)\end{array}\right]
\quad\mbox{and}\quad
PU=\left[\begin{array}{c}0\\ \widetilde U\end{array}\right],
\]
where $R(\lambda)$ and $S(\lambda)$ are the regular and singular parts of $A-\lambda B$,
respectively, and $PU$ is partitioned conformably with $P(A-\lambda B)Q$.
\end{proposition}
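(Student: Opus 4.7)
The plan is to reduce to Kronecker canonical form and then exhibit an explicit block-triangular secondary transformation that absorbs $U_1$ into the singular rows. By Theorem~\ref{thm:knf} there exist nonsingular $P_0, Q_0$ with $P_0(A-\lambda B)Q_0 = \operatorname{diag}(R(\lambda), S(\lambda))$; partition $P_0 U = \begin{bmatrix} U_1 \\ U_2 \end{bmatrix}$ conformally, with $U_1 \in \C^{(r+s)\times k}$. It suffices to find nonsingular $(P', Q')$ leaving $\operatorname{diag}(R, S)$ block diagonal and satisfying $P' \begin{bmatrix} U_1 \\ U_2\end{bmatrix} = \begin{bmatrix} 0 \\ \widetilde U \end{bmatrix}$, since then $P = P' P_0$ and $Q = Q_0 Q'$ will do.

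For this I take the block-triangular ansatz
\[
P' = \begin{bmatrix} I & X \\ 0 & I \end{bmatrix}, \qquad
Q' = \begin{bmatrix} I & Y \\ 0 & I \end{bmatrix},
\]
with $X \in \C^{(r+s)\times(n-r-s)}$ and $Y \in \C^{(r+s)\times(m-r-s)}$ constant. A direct computation shows that $P' \operatorname{diag}(R,S) Q' = \operatorname{diag}(R, S)$ if and only if the pencil identity $R(\lambda) Y + X S(\lambda) = 0$ holds; splitting $R = R_0 - \lambda R_1$ and $S = S_0 - \lambda S_1$ this decouples into the pair $R_0 Y + X S_0 = 0$, $R_1 Y + X S_1 = 0$. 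The target condition on $U$ reduces to the linear system $X U_2 = -U_1$.

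The main step is to verify that, generically in $U$, this coupled system in $(X, Y)$ admits a solution. The hypothesis of at least $k$ left minimal indices enters as follows: at every $\lambda_0 \in \C \cup \{\infty\}$, the left null space of $S(\lambda_0)$ has dimension at least $\ell \geq k$. Premultiplying the pencil identity $R(\lambda) Y + X S(\lambda) = 0$ by a left (generalized) eigenvector $w^\top$ of $R$ at $\lambda_0$ and evaluating there forces $(w^\top X) S(\lambda_0) = 0$, i.e., $w^\top X$ is constrained to lie in this left null space. Since the null space is at least $k$-dimensional, there is still enough freedom to additionally satisfy the target equation $(w^\top X) U_2 = -w^\top U_1$ for generic $U_2$; running this argument over all left eigenvectors of $R$ and counting dimensions then produces the desired $X$, and the corresponding $Y$ is recovered from the two coefficient equations.

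The hard part will be the last dimension-count step: careful bookkeeping is required when $R$ has nontrivial Jordan structure (so that one works with chains of generalized eigenvectors rather than single ones) and when $R$ has the infinite eigenvalue produced by the $I - \lambda N$ blocks, and one must check that the two coefficient conditions on $Y$ are simultaneously solvable once $X$ is fixed. Once this is in place, the set of bad $U$ is clearly cut out by polynomial equations in the entries of $U$, and its properness follows from the trivial case $U_1 = 0$ (for which $X = 0$, $Y = 0$ already works), so the claim holds generically.
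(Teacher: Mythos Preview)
Your ansatz is exactly the paper's: reduce to KCF and seek block upper-triangular $P', Q'$ that preserve the block-diagonal form while annihilating the top block of $P_0U$. The paper arrives at the same coupled conditions $R(\lambda)Y + XS(\lambda) = 0$ together with $XU_2 = -U_1$ (in your notation).

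Where the paper differs from your sketch is in how this system is solved. You propose a row-by-row argument via left (generalized) eigenvectors of $R$, which---as you correctly anticipate---demands separate bookkeeping for Jordan chains and for the nilpotent part $I-\lambda N$, and leaves the recoverability of $Y$ as a residual check. The paper sidesteps all of this by working column-wise and restricting $X$ a priori to act only on $k$ chosen left singular blocks $L_{n_i}(\lambda)^\top = G_{n_i} - \lambda H_{n_i}$, with the remaining columns of $X$ set to zero. For the $i$th such block the admissible column group is shown directly to be $P_i = [\,p_{i,0},\, Jp_{i,0},\, \dots,\, J^{n_i}p_{i,0}\,]$ for a single free vector $p_{i,0}\in\C^r$, with the corresponding $Y$-block determined as $-P_iH_{n_i}$. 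Because powers of $J$ automatically respect whatever Jordan structure $J$ carries, no case distinction is needed; the infinite-eigenvalue part is handled by the same argument with $N$ in place of $J$. The target constraint then vectorizes to a square $rk\times rk$ linear system in $(p_{1,0},\dots,p_{k,0})$, and genericity follows by exhibiting one particular $U$ for which the coefficient matrix is the identity.

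So your outline is sound and ultimately equivalent to the paper's route, but the explicit column parametrization via $J^j p_{i,0}$ is what turns your ``hard part'' into a two-line computation rather than a delicate dimension count.
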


\begin{proof}
Without loss of generality we may assume that $A-\lambda B$ is already in the KCF
\[
A-\lambda B=\left[\begin{array}{cc} R(\lambda)&0\\ 0& S(\lambda)\end{array}\right],\quad
R(\lambda)=\left[\begin{array}{cc}J-\lambda I&0\\ 0&I-\lambda N\end{array}\right],
\]
where $S(\lambda)$ is the singular part and $R(\lambda)$ is the regular part of
$A-\lambda B$ with $J,N$ in Jordan normal canonical form and, in addition, $N$ nilpotent.
Our main strategy is to use the part of $P$ that corresponds to $k$ arbitrarily chosen
left minimal indices to introduce zeros in the components of $U$ that correspond to the regular part of
$A-\lambda B$. Here, we can treat the components of $U$ corresponding to the ``finite eigenvalue part''
$J-\lambda I$ and the ``infinite eigenvalue part'' $I-\lambda N$ separately and we will give the
proof only for the first case as the proof for the ``infinite eigenvalue part'' is completely analogous.
Since we will only transform parts of the singular part $S(\lambda)$ that correspond to $k$
arbitrarily chosen left minimal indices and leave all other parts of $S(\lambda)$ unchanged,
it is sufficient to assume that $S(\lambda)$ consists of only $k$ singular blocks corresponding
to $k$ left minimal indices and that $R(\lambda)$ does not have a part corresponding
to infinite eigenvalues. This assumption will simplify the notation considerably.

Thus, we assume that $U$ and $A-\lambda B$ have the forms
\[
U=\bmat{ & \scriptstyle k\cr \scriptstyle r & U_0\cr\scriptstyle n_1+1 & U_1\cr
\vdots & \vdots\cr \scriptstyle n_k+1 & U_k}
\quad\mbox{and}\quad
A-\lambda B=\bmat{ & \scriptstyle r & \scriptstyle n_1 & \dots & \scriptstyle n_k\cr
\scriptstyle r & J-\lambda I & & & \cr
\scriptstyle n_1+1 &  &L_{n_1}(\lambda)^\top & & \cr
\vdots & & & \ddots & \cr
\scriptstyle n_k+1 & & & & L_{n_k}(\lambda)^\top}.
\]
In the following, we will use the notation $G_\ell=[0 \ \, I_\ell]^\top$ and
$H_\ell=[I_\ell \ \, 0]^\top$ to write
$L_\ell(\lambda)^\top=G_\ell-\lambda H_\ell$ for a singular block corresponding to a left minimal
index $\ell$. For the transformation matrices $P,Q$ we make the ansatz
\[
P=\bmat{ &\scriptstyle r &\scriptstyle n_1+1 & \dots &\scriptstyle n_k+1\cr
\scriptstyle r & I & P_1 & \dots & P_k\cr \scriptstyle n_1+1 & & I & &\cr
\vdots & & & \ddots & \cr \scriptstyle n_k+1 & & & & I}\quad\mbox{and}\quad
Q=\bmat{ &\scriptstyle r &\scriptstyle n_1 & \dots &\scriptstyle n_k\cr
\scriptstyle r & I & -P_1H_{n_1} & \dots & -P_kH_{n_k}\cr \scriptstyle n_1 & & I & &\cr
\vdots & & & \ddots & \cr \scriptstyle n_k & & & & I}
\]
As $P$ and $Q$ are designed in such a way that $B$ remains unchanged, we get
\[
P(A-\lambda B)Q =
\left[\begin{array}{cccc} J-\lambda I&P_1G_{n_1}-JP_1H_{n_1}& \dots & P_kG_{n_k}-JP_kH_{n_k}\\[0.5mm]
 &L_{n_1}(\lambda)^\top & & \\[-1mm] & & \ddots & \\ & & &L_{n_k}(\lambda)^\top\end{array}\right].
\]
It remains to find solutions $P_i$ to the equations $P_iG_{n_i}-JP_iH_{n_i}=0$
for $i=1,\dots,k$ to obtain $P(A-\lambda B)Q=A-\lambda B$.
Setting $P_i=[p_{i,0} \ \, p_{i,1} \ \, \dots \ \, p_{i, n_i}]$ for
$i=1,\dots,k$, the equations to be solved take the form
\[
[p_{i,1} \ \, \dots \ \, p_{i, n_i}]=P_iG_{n_i}=JP_iH_{n_i}
=[Jp_{i,0} \ \, \dots \ \, Jp_{i, n_i-1}],\quad i=1,\dots,k.
\]
Thus, we may choose $P_i=[p_{i,0} \ \, Jp_{i,0} \ \, \dots \ \, J^{n_i}p_{i,0}]$,
where $p_{i,0}\in\mathbb C^{r}$ is arbitrary. We will now use the freedom in the choice
of $p_{i,0}$ to guarantee that $PU$ has the desired form. To this end, let
$u_{i,0}^\top,\dots,u_{i, n_i}^\top\in\mathbb C^k$ be the rows of $U_i$, $i=1,\dots,k$.
Then the first block component of $PU$ is given by $U_0+P_1U_1+\cdots+P_kU_k$ and to make it zero,
we have to solve the equation
\begin{equation}\label{eq:pio}
-U_0=\sum_{i=1}^kP_iU_i=\sum_{i=1}^k\sum_{j=0}^{n_i}J^jp_{i,0}\,u_{i,j}^\top
\end{equation}
for $p_{1,0},\dots,p_{k,0}$. Using the $\text{vec}$-operation that ``vectorizes'' a matrix
by stacking the columns on top of each other and recalling the well-known identity
$\text{vec}(XYZ)=(Z^\top\otimes X)\,\text{vec}(Y)$
for matrices $X,Y,Z$, where $\otimes$ denotes the Kronecker product, we obtain that
\begin{equation}\label{eq:M}
-\text{vec}(U_0)=\sum_{i=1}^k\sum_{j=0}^{n_i}\big(u_{i,j}\otimes J^j\big)p_{i,0}
=M\cdot \bigg[ \begin{array}{c} p_{1,0}\\[-1.5mm] \vdots\\[-1.5mm] p_{k,0} \end{array} \bigg],
\end{equation}
where
\[
M=\Big[\sum_{j=0}^{n_1}u_{1,j}\otimes J^j \ \, \dots \ \, \sum_{j=0}^{n_k}u_{k,j}\otimes J^j\Big] \in\mathbb C^{rk,rk}.
\]
The determinant of $M$ is a polynomial in the $nk$ entries of $U$ (in fact, it only depends on
the entries of $U_1,\dots,U_k$) which is nonzero for the particular choice $u_{1,0}=e_1,\dots,u_{k,0}=e_k$
and $u_{i,j}=0$ for $j>0$, where $e_1,\dots,e_k$ denote the standard basis vectors of $\mathbb C^k$.
(Indeed, in this case $M$ is just the identity of size $rk\times rk$.) Thus, generically (with respect to
the entries of $U$), the matrix $M$ is invertible, so equation~\eqref{eq:M} and thus also~\eqref{eq:pio}
can be uniquely solved for $p_{1,0},\dots,p_{k,0}$ which finishes the proof.
\end{proof}

The next result shows that the canonical form of the regular part of the original pencil stays invariant
under a generic rank-completing perturbation of the form~\eqref{pert1awithtau}.
Concerning the eigenvalues of the perturbed pencil that are also eigenvalues of the original
singular pencil, the result also states that the corresponding left and right eigenvectors satisfy a
particular orthogonality relation.

\begin{theorem}\label{thm:nfp}
Let $A-\lambda B$ be an $n\times n$ singular pencil of normal rank $n-k$, let $U,V\in\mathbb C^{n,k}$ have full column rank
and let $D_A,D_B\in\mathbb C^{k,k}$ be such that $D_A-\lambda D_B$ is regular and all
eigenvalues of $D_A-\lambda D_B$ are distinct from the eigenvalues of $A-\lambda B$.
Then,
generically with respect to the entries of $U$ and $V^*$, the following
statements hold for the pencil \eqref{pert1awithtau}:
\begin{enumerate}
\item For each $\tau\neq 0$, there exist nonsingular matrices $\widetilde P$ and $\widetilde Q$ such that
\begin{equation}\label{eq:nfp}
\widetilde P\big(\widetilde A-\lambda\widetilde B\big)\widetilde Q=\left[\begin{array}{cc}R(\lambda)&0\\
0& R_{\rm new}(\lambda)\end{array}\right],
\end{equation}
where $R(\lambda)$ is the regular part of the original pencil $A-\lambda B$,
and $R_{\rm new}(\lambda)$ is regular and all
its eigenvalues are distinct from the eigenvalues of
$R(\lambda)$.
\item If $\lambda_0$ is a finite eigenvalue of $A-\lambda B$, i.e., $\text{rank}(A-\lambda_0 B)<n-k$,
then $\lambda_0$ is an eigenvalue of~\eqref{pert1awithtau} for each $\tau\ne 0$. Furthermore, the right null space
    ${\cal N}_r(\lambda_0):=\ker\big(\widetilde A-\lambda_0 \widetilde B\big)$ and
    the left null space ${\cal N}_l(\lambda_0):=
    \ker\big(\big(\widetilde A-\lambda_0 \widetilde B\big)^*\big)$ are both
    constant in $\tau\ne 0$. In addition:
    \begin{enumerate}
    \item ${\cal N}_r(\lambda_0)\perp {\rm span}(V)$, i.e., if $x$ is a right eigenvector of
    $\widetilde A-\lambda\widetilde B$ associated with $\lambda_0$, then $V^*x=0$.
    \item ${\cal N}_l(\lambda_0)\perp {\rm span}(U)$, i.e., if $y$ is a left eigenvector of
    $\widetilde A-\lambda\widetilde B$ associated with  $\lambda_0$, then $U^*y=0$.
    \end{enumerate}
\item If $\infty$ is a eigenvalue of $A-\lambda B$, i.e., $\text{rank}(B)<n-k$,
then $\infty$ is an eigenvalue of~\eqref{pert1awithtau} for each $\tau\ne 0$. The right and left null spaces
    ${\cal N}_r(\infty):=\ker(\widetilde B)$ and
    ${\cal N}_l(\infty):=\ker\big(\widetilde B^*\big)$ are both
    constant in $\tau\ne 0$. In addition:
    \begin{enumerate}
    \item ${\cal N}_r(\infty)\perp {\rm span}(V)$, i.e., if $x$ is a right eigenvector of
    $\widetilde A-\lambda\widetilde B$ associated with $\infty$, then $V^*x=0$.
    \item ${\cal N}_l(\infty)\perp {\rm span}(U)$, i.e., if $y$ is a left eigenvector of
    $\widetilde A-\lambda\widetilde B$ associated with $\infty$, then $U^*y=0$.
    \end{enumerate}
\end{enumerate}
\end{theorem}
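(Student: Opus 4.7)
The plan is to reduce the perturbed pencil to a block diagonal canonical form in which the regular part of $A-\lambda B$ appears as an untouched direct summand, and then read off all the claims from this structure. The central reduction step is a double application of Proposition~\ref{prop:genpert}: once, as stated, using the $k$ left minimal indices to find nonsingular $P_1,Q_1$ with $P_1(A-\lambda B)Q_1=\mathrm{diag}(R(\lambda),S(\lambda))$ and $P_1U=\bigl[\begin{smallmatrix}0\\ \widetilde U\end{smallmatrix}\bigr]$; and a second time, applied in symmetric fashion to the right minimal indices (which also number $k$ since the pencil is square), producing analogous column transformations that clean $V^{*}$ so that $V^{*}Q=[\,0\ \ \widetilde V^{*}\,]$. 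The row operations from the first step touch only rows of the regular part and of the $k$ left singular blocks, while the column operations from the second step touch only columns of the regular part and of the $k$ right singular blocks; because these singular blocks are disjoint in the KCF, the two cleaning procedures can be carried out simultaneously, and the genericity set is the intersection of two generic sets (hence generic).

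After this reduction, the perturbed pencil transforms into
\[
P(\widetilde A-\lambda\widetilde B)Q=\left[\begin{array}{cc} R(\lambda) & 0 \\ 0 & \widehat S(\lambda)\end{array}\right],
\qquad
\widehat S(\lambda):=S(\lambda)+\tau\,\widetilde U(D_A-\lambda D_B)\widetilde V^{*},
\]
so $R(\lambda)$ sits as a direct summand, which already yields claim~1 up to analyzing $\widehat S$. The key technical step is to show that, generically in $(U,V)$, the pencil $\widehat S(\lambda)$ is regular for every $\tau\neq 0$ and shares no eigenvalue with $R(\lambda)$. For regularity, I would exhibit one specific choice of $(\widetilde U,\widetilde V,D_A,D_B,\tau)$ for which $\det\widehat S(\lambda)\not\equiv 0$ (exploiting that $S(\lambda)$ has normal rank deficiency exactly $k$, so a rank-$k$ perturbation that ``closes'' each singular block suffices), and then invoke the standard principle that a nonvanishing polynomial condition holds on a generic set. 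For the eigenvalue-disjointness, fix any eigenvalue $\mu$ of $R(\lambda)$ and consider $f_\mu(\tau,U,V):=\det\widehat S(\mu)$, which is a polynomial in $\tau$ of degree at most $k$ with coefficients polynomial in $(U,V)$; the leading coefficient in $\tau$ is (up to constants) related to a minor of $\widetilde U D(\mu)\widetilde V^{*}$ completing $S(\mu)$ to full rank, and this leading coefficient is generically nonzero, while the constant term is $\det S(\mu)=0$. Thus $\tau$ may be factored out: the remaining factor is a polynomial in $\tau$ that is generically not identically zero and whose roots, by another genericity argument over the finite set of eigenvalues of $R$, can be avoided for all $\tau\neq 0$.

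With these facts in hand, $\widehat S(\lambda_0)$ is invertible for every $\tau\neq 0$ and every eigenvalue $\lambda_0$ of $R$, so in transformed coordinates the right null space of $P(\widetilde A-\lambda_0\widetilde B)Q$ equals $\{\bigl[\begin{smallmatrix}x\\ 0\end{smallmatrix}\bigr]:x\in\ker R(\lambda_0)\}$, which is visibly independent of $\tau$. Transforming back via the $\tau$-independent $Q$ proves constancy of ${\cal N}_r(\lambda_0)$, and the orthogonality $V^{*}x=0$ is immediate from $V^{*}Q=[0\ \ \widetilde V^{*}]$. The statements about the left null space and the orthogonality $U^{*}y=0$ follow symmetrically from $PU=\bigl[\begin{smallmatrix}0\\ \widetilde U\end{smallmatrix}\bigr]$ by replacing $Q$ with $P^{*}$ and working with $\bigl(P(\widetilde A-\lambda_0\widetilde B)Q\bigr)^{*}$. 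The case $\lambda_0=\infty$ is handled identically by applying the same block decomposition to $\widetilde B$ alone (equivalently, using that the infinite eigenvalue corresponds to the nilpotent block $I-\lambda N$ inside $R(\lambda)$).

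The main obstacle, as indicated above, is the eigenvalue-disjointness step: one must rule out the bad values of $\tau$ uniformly for generic $(U,V)$, rather than only for fixed $\tau$. The cleanest route is to observe that after extracting the $\tau=0$ root of $f_\mu(\tau,U,V)$, the resulting polynomial has generically nonzero leading coefficient and generically nonzero constant term, so every nonzero root $\tau$ defines a proper algebraic constraint on $(U,V)$; the finite union over the eigenvalues $\mu$ of $R$ remains a proper algebraic set, and its complement is the desired generic set on which claim~1 holds for all $\tau\neq 0$ simultaneously.
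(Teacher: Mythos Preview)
Your block-diagonal strategy---applying Proposition~\ref{prop:genpert} twice, once for $U$ via the left minimal indices and once (in transposed form) for $V$ via the right minimal indices---is a different route from the paper's and is sound. The paper applies the proposition only once, obtaining a block \emph{triangular} form
\[
P(\widetilde A-\lambda\widetilde B)Q=\left[\begin{array}{cc}R(\lambda)&0\\ \tau\,U_2(D_A-\lambda D_B)V_1^*&R_{\rm new}(\lambda)\end{array}\right]
\]
with $PU=\bigl[\begin{smallmatrix}0\\ U_2\end{smallmatrix}\bigr]$ but $Q^*V=\bigl[\begin{smallmatrix}V_1\\ V_2\end{smallmatrix}\bigr]$ where $V_1\neq 0$ in general, reads off the left null space claims directly, and then handles the right null space by duality (reapplying the argument to $A^*-\lambda B^*$). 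Your simultaneous cleaning avoids that asymmetry; it does compose correctly because, in the KCF block ordering (regular, left-singular, right-singular), the pair $P_1,Q_1$ from the first application has its only off-diagonal entries in position $(1,2)$ while $P_2,Q_2$ from the second has theirs in position $(3,1)$, so for instance $P_2P_1(A-\lambda B)Q_1Q_2=P_2(A-\lambda B)Q_2=A-\lambda B$ and both $P_2P_1U$ and $V^*Q_1Q_2$ keep the zero in the regular block. That is a bit more than your ``disjoint blocks'' remark asserts, but it checks out.

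Your eigenvalue-disjointness argument, however, does not work as written. After factoring one $\tau$ from $f_\mu(\tau)=\det\widehat S(\mu)$ you claim the remaining polynomial has ``generically nonzero constant term'' and that ``every nonzero root $\tau$ defines a proper algebraic constraint on $(U,V)$''. For $k\ge 2$ the first claim is false, and the second is not an argument: a polynomial of positive degree in $\tau$ has roots for \emph{every} choice of $(U,V)$, so nonzero roots cannot be excluded by a genericity condition in $(U,V)$ alone. What actually saves you is that $f_\mu(\tau)=c_k(U,V)\,\tau^k$ \emph{exactly}, with no lower-order terms: $S(\mu)$ has rank deficiency precisely $k$ for every $\mu\in\mathbb C$, and the perturbation $E=\widetilde U(D_A-\mu D_B)\widetilde V^*$ has rank at most $k$, so in the multilinear expansion of $\det(S(\mu)+\tau E)$ every term selecting fewer than $k$ columns of $E$ draws more than $\operatorname{rank}S(\mu)$ columns from $S(\mu)$ and vanishes. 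Hence $f_\mu(\tau)\neq 0$ for all $\tau\neq 0$ is equivalent to the single condition $c_k(U,V)\neq 0$, which is generic by a specific-choice argument. This is exactly what the paper's explicit computation---yielding $\det R_{\rm new}(\lambda_0)=\tau^k(-\lambda_0)^{n_1+\cdots+n_k}\prod_i(\alpha_i-\lambda_0\beta_i)$ for a particular $U_2=V_2$---is showing; note that the paper first moves $0$ and $\infty$ out of the spectrum by a M\"obius transformation so that this expression is nonzero.
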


\begin{proof}
First, we will assume that $A-\lambda B$ does not have one of
the eigenvalues $0$ or $\infty$ and we will show 1) and 2) for this particular case.

Applying Proposition~\ref{prop:genpert}, there generically exist nonsingular matrices $P,Q$ such that
\begin{equation}\label{eq:help1}
P(A-\lambda B)Q=\left[\begin{array}{cc}R(\lambda)&0\\ 0& S(\lambda)\end{array}\right],\quad
PU=\left[\begin{array}{c}0\\ U_2\end{array}\right],\quad Q^*V=\left[\begin{array}{c}V_1\\ V_2\end{array}\right],
\end{equation}
where $R(\lambda)$ and $S(\lambda)$ are the regular and singular parts of $A-\lambda B$,
respectively, both being in KCF, and
where $U$ and $V$ are partitioned conformably with $A-\lambda B$. We will now show 1) and 2):

1) Since $A-\lambda B$ is square and of normal rank $n-k$, it has exactly $k$ left minimal indices, say $n_1,\dots, n_k$
and exactly $k$ right minimal indices, say $m_1,\dots,m_k$. We may assume without loss of generality
that they are paired up to form square blocks of one left and right minimal index each, i.e., we may
assume that $S(\lambda)$ has the block diagonal form
\[
S(\lambda)=\text{diag}\left(\left[\begin{array}{cc}L_{n_1}(\lambda)^\top &0\\ 0&L_{m_1}(\lambda)\end{array}\right],\dots,
\left[\begin{array}{cc}L_{n_k}(\lambda)^\top &0\\ 0&L_{m_k}(\lambda)\end{array}\right]\right).
\]
Then the perturbed pencil takes the form
\[
P(\widetilde A-\lambda \widetilde B)Q=\left[\begin{array}{cc}R(\lambda) & 0\\
\tau \, U_2(D_A-\lambda D_B)V_1^* & R_{\rm new}(\lambda)\end{array}\right],
\]
where
\[
R_{\rm new}(\lambda):= S(\lambda)+\tau \, U_2(D_A-\lambda D_B)V_2^*.
\]
Clearly, the determinant of $P(\widetilde A-\lambda \widetilde B)Q$ is
equal to $\det R(\lambda)\cdot\det R_{\rm new}(\lambda)$
and from the definition of $R_{\rm new}(\lambda)$ it is clear that the coefficients of
$\det R_{\rm new}(\lambda)$ are polynomials in the entries of $U_2$ and $V_2^*$ and thus also
of $U$ and $V^*$.

Now let $\lambda_0$ be an eigenvalue of $R(\lambda)$, i.e., $\det R(\lambda_0)=0$.
Note that if $e_{j,\ell}$ denotes the $j$th standard basis vector of $\mathbb C^{\ell}$,
and $F_i = (\alpha_i-\lambda_0\beta_i)e_{n_i+1,n_i+m_i+1}e_{n_i+1,n_i+m_i+1}^*$, then
\[
\det\left(\left[\begin{array}{cc}L_{n_i}(\lambda_0)^\top &0\\ 0&L_{m_i}(\lambda_0)\end{array}\right]+F_i\right)
=(-\lambda_0)^{n_i}(\alpha_i-\lambda_0\beta_i).
\]
Thus, with $e_j$ the $j$th standard basis vector in $\mathbb C^n$, for the particular choice
\[
U_2=V_2=[e_{n_1+1} \ \, e_{n_1+m_1+1+ n_2+1} \ \, \dots \ \,
e_{n_1+m_1+1+\cdots+n_{k-1}+m_{k-1}+1+n_k+1}]
\]
we obtain that
\[
\det R_{\rm new}(\lambda_0)= \tau^k \, (-\lambda_0)^{n_1+\cdots+ n_k}
(\alpha_1-\lambda_0\beta_1) \cdots (\alpha_k-\lambda_0\beta_k)
\]
which is nonzero as the eigenvalues of $D_A-\lambda D_B$ are by hypothesis distinct from $\lambda_0$.
But then $\det R_{\rm new}(\lambda_0)$ is generically nonzero (the set of all $(U,V^*)$
for which $\det R_{\rm new}(\lambda_0)=0$ is by definition an algebraic set, because
$\det R_{\rm new}(\lambda_0)$ is a polynomial in the entries of $U$ and $V^*$) which shows that
$R_{\rm new}(\lambda_0)$ is generically regular and does not have $\lambda_0$ as an eigenvalue. Since intersections
of finitely many generic sets are still generic we can conclude that the spectra of $R(\lambda)$
and $R_{\rm new}(\lambda)$ are disjoint. But then it immediately follows from \cite[Lemma 6.11]{MacMMM13}
and \cite[XII.2, Thm.~2]{Gan59} that the perturbed pencil $\widetilde A-\lambda \widetilde B$ has
the KCF as given in~\eqref{eq:nfp}.

2) Let $\lambda_0$ be a eigenvalue of $A-\lambda B$ and thus of $R(\lambda)$. It then follows
directly from 1) that $\lambda_0$ is also an eigenvalue of $\widetilde A-\lambda\widetilde B$
for each $\tau\neq 0$. For the moment, let $\tau\neq 0$ be fixed and let
the columns of $Y$ form a basis of the left null space $\mathcal N_l(\lambda_0)$ of $\widetilde A-\lambda
\widetilde B$. Partition
\[
Y^*P^{-1}=[Y_1^* \ \, Y_2^*]
\]
conformably with the partition in~\eqref{eq:help1}. Since $\lambda_0$ is not an eigenvalue of
$R_{\rm new}(\lambda)$ we obtain from $Y^*(\widetilde A-\lambda_0 \widetilde B)=0$ that $Y_2=0$.
But this implies that the columns of $Y$ form a basis for the left null space $\mathcal N_l(\lambda_0)$
for all values $\tau\ne 0$ as the construction of the transformation matrices
$P$ and $Q$ only depends on $A$, $B$, and $U$, but not on $\tau$. Furthermore, we obtain
\[
Y^*U=Y^*P^{-1}PU = [Y_1^* \ \, 0]
\cdot\left[\begin{array}{c}0\\ U_2\end{array}\right]=0,
\]
i.e., $N_l(\lambda_0)$ is orthogonal to the space spanned by the columns of $U$.

Observe that the statement on the right null space $\mathcal N_r(\lambda_0)$ does not follow
immediately from the partitioning in~\eqref{eq:help1} as in general we have $V_1\neq 0$.
But we can apply the already proved part of the theorem to the
pencil $A^*-\lambda B^*$ and the perturbation $V(D_A^*-\lambda D_B^*)U^*$ to obtain the corresponding
statements for the right null space $N_r(\lambda_0)$.
This finishes the proof of 2).

Finally, assume that $A-\lambda B$ does have one of the eigenvalues $0$ or $\infty$. Then
apply a M\"obius transformation of the form
\[
\mathrm M_{\alpha,\beta}(A-\lambda B):=\alpha A+\beta B-\lambda \, (\alpha B-\beta A)
\]
where $\alpha,\beta\in\mathbb R$ are such that $\alpha^2+\beta^2=1$ and such that
$\mathrm M_{\alpha,\beta}(A-\lambda B)$ does neither have the eigenvalues $0$ nor $\infty$.
Note that this M\"obius transformation just has the effect of ``rotating'' eigenvalues
on the extended real line $\mathbb R\cup\{\infty\}$, but it leaves eigenvectors and the Jordan
structure invariant, see, e.g., \cite{MacMMM15}. The result then follows by applying the already
proved parts of the theorem on $\mathrm M_{\alpha,\beta}(A-\lambda B)$ followed by applying the inverse
M\"obius transformation
\[
\mathrm M_{\alpha,-\beta}(C-\lambda D):=\alpha C-\beta D-\lambda \, (\alpha D+\beta C).
\]
to give the corresponding statements for $A-\lambda B$.
In particular, this shows 3).
\end{proof}

\begin{remark}\rm
We mention that part 1) in Theorem~\ref{thm:nfp} is in line with one of the main results of \cite{DD07},
where it was shown that generically the regular part of a singular pencil stays invariant under
generic perturbations that do not make the pencil regular. Part 1) of Theorem~\ref{thm:nfp} extends this
result (in the sense of the theorem) to the case of rank-completing perturbation. Clearly, the regular
part of the pencil will be completely changed if generic perturbations of a rank larger than the
difference of the size and the normal rank of the pencil are applied.
\end{remark}

Theorem~\ref{thm:nfp} characterizes the properties of the eigenvalues from the block $R$ of the perturbed
pencil $\widetilde A-\lambda \widetilde B$ as in~\eqref{pert1awithtau}, i.e., of the eigenvalues
that coincide with the eigenvalues of the unperturbed pencil. We will next
investigate the properties of the eigenvalues from the newly created block $R_{\rm new}$.
We start with the following lemma that will be needed for the main results. The values
$\gamma_1,\ldots,\gamma_k$ in the lemma are the eigenvalues that we will prescribe later in
Theorem~\ref{thm:mainBor} using the matrices $D_A$ and $D_B$.

\begin{lemma}\label{lem:Lo} Let $A-\lambda B$ be an $n\times n$ singular pencil of normal rank $n-k$
with left minimal indices $n_1,\dots,n_k$ and right minimal indices $m_1,\dots,m_k$. Furthermore, let
$U,V\in\mathbb C^{n,k}$ have full column rank, $N:=n_1+\cdots+n_k$, $M:=m_1+\cdots+m_k$, and let
$\gamma_1,\dots,\gamma_k\in\mathbb C$
be given values that are distinct from the eigenvalues of $A-\lambda B$.
Then, generically with respect to the entries of $U$ and $V^*$, the following statements hold:
\begin{enumerate}
\item
There exist exactly $M$ pairwise distinct values $\alpha_1,\ldots,\alpha_M$
different from the eigenvalues of $A-\lambda B$ and different
from $\gamma_1,\ldots,\gamma_k$
such that for each
$\alpha_i$ there exists a nonzero vector $z_i$ with $(A-\alpha_iB)z_i=0$ and $V^*z_i=0$.
\item There exist exactly $N$ pairwise distinct values $\beta_1,\ldots,\beta_N$ different from the
eigenvalues of $A-\lambda B$ and different from $\gamma_1,\ldots,\gamma_k$ and $\alpha_1,\dots,\alpha_M$ such that for each $\beta_i$ there
exists a nonzero vector $w_i$ with $w_i^*(A-\beta_iB)=0$ and $w_i^*U=0$.
\item For any given set of
$k$ linearly independent vectors $t_1,\dots,t_k\in\mathbb C^k$ there exist nonzero vectors
$s_1,\dots,s_k$ with $(A-\gamma_i B)s_i=0$ and $t_i=V^*s_i$ for $i=1,\dots,k$.
\end{enumerate}
\end{lemma}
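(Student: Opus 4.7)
The plan is to pass to the Kronecker canonical form of Theorem~\ref{thm:knf} and reduce each statement to a determinantal polynomial identity in $\alpha$. Replacing $U$ and $V$ by $PU$ and $Q^{-*}V$ does not affect genericity (since $P,Q$ are fixed and invertible), so one may assume $A-\lambda B = \mathrm{diag}(R(\lambda), S(\lambda))$ with $R$ regular and $S$ containing the $k$ right blocks $L_{m_j}(\lambda)$ and $k$ left blocks $L_{n_i}(\lambda)^\top$. For any $\alpha$ that is not an eigenvalue of $R$, the null space of $A-\alpha B$ is $k$-dimensional, spanned by vectors $\hat z_j(\alpha)$, one per right block, whose only nonzero entries are $(1,\alpha,\ldots,\alpha^{m_j})^\top$ placed on the columns occupied by the $j$-th right block. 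Collect them into $Z(\alpha) \in \mathbb{C}^{n\times k}$; then $\ker(A-\alpha B) = \{Z(\alpha)c : c\in\mathbb{C}^k\}$.

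For part~1), the system $(A-\alpha B)z=0$, $V^*z = 0$ with $z\neq 0$ is equivalent to $\det W(\alpha) = 0$, where $W(\alpha) := V^*Z(\alpha)$ is $k\times k$; column $j$ is a polynomial in $\alpha$ of degree at most $m_j$, so $\deg\det W(\alpha) \le M$. The leading coefficient is the $k\times k$ minor of $V^*$ formed by the columns indexed by the last entries of each right block; a nonempty Zariski-open condition on $V$ makes this minor invertible, so generically $\deg\det W(\alpha) = M$. A toy example with $V$ concentrated on those distinguished columns shows that the discriminant of $\det W$ (a polynomial in the entries of $V^*$) is not identically zero, so generically $\det W(\alpha)$ has $M$ simple roots. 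Excluding the finitely many conditions that these roots coincide with eigenvalues of $R$ or with $\gamma_1,\dots,\gamma_k$ removes a further proper algebraic set, preserving genericity. This yields the $M$ values $\alpha_i$ of part~1); the corresponding $z_i$ is $Z(\alpha_i)c_i$ for any nonzero $c_i \in \ker W(\alpha_i)$.

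Part~2) is obtained by applying the same reasoning to the conjugate transpose pencil $(A-\lambda B)^*$, with $U$ playing the role of $V$: the right and left minimal indices are interchanged, so the analogous polynomial has degree $N = n_1+\cdots+n_k$, giving $N$ generic roots $\beta_i$; additional genericity forbids coincidence with the already-produced $\alpha_j$ and with the $\gamma_j$. For part~3), fix $\gamma_i$, which by hypothesis is not an eigenvalue of $R$. The null space of $A-\gamma_i B$ is $\{Z(\gamma_i)c : c\in\mathbb{C}^k\}$, and the map $c\mapsto V^*Z(\gamma_i)c = W(\gamma_i)c$ is an endomorphism of $\mathbb{C}^k$. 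Since part~1) already ensured that the roots of $\det W$ are disjoint from $\{\gamma_1,\dots,\gamma_k\}$, the matrix $W(\gamma_i)$ is invertible, and $s_i := Z(\gamma_i)W(\gamma_i)^{-1}t_i$ has all the required properties.

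The main obstacle I anticipate is consolidating the coexisting generic conditions—full leading degree, simple roots, and avoidance of the finitely many forbidden values in both parts~1) and~2)—into a single proper algebraic subset of $(\mathbb{C}^{n,k})^2$. The cleanest route is to exhibit one explicit $(U,V)$ in the KCF (for instance $V$ supported on the last column of each right block and $U$ supported on the last row of each left block) for which every nondegeneracy can be read off directly from block-diagonal structure, and then invoke the fact that the complement of a finite union of proper algebraic sets is generic. Possible infinite eigenvalues of $A-\lambda B$ are handled via a preliminary M\"obius rotation, exactly as in the last step of the proof of Theorem~\ref{thm:nfp}.
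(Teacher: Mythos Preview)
Your approach is essentially identical to the paper's: both pass to KCF, parametrize $\ker(A-\alpha B)$ by the vectors $(1,\alpha,\dots,\alpha^{m_j})^\top$ supported on the right singular blocks, form the $k\times k$ matrix $W(\alpha)=V^*Z(\alpha)$ (the paper calls it $G(\alpha)$), and argue that $\det W(\alpha)$ is generically a degree-$M$ polynomial with simple roots avoiding any prescribed finite set; part~2) is the dual statement and part~3) follows from invertibility of $W(\gamma_i)$.

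There is one concrete slip. The witness you propose for the discriminant being generically nonzero---``$V$ supported on the last column of each right block''---does \emph{not} work: with $v_j=e_{\text{last of block }j}$ one gets $W(\alpha)=\mathrm{diag}(\alpha^{m_1},\dots,\alpha^{m_k})$ and $\det W(\alpha)=\alpha^{M}$, whose only root $0$ has multiplicity $M$, so the discriminant vanishes. You need a witness that produces $M$ distinct roots; the paper takes $v_j=e_{\text{last of block }j}-\varepsilon_j\,e_{\text{first of block }j}$, which makes $W(\alpha)$ diagonal with $\det W(\alpha)=\prod_{j=1}^k(\alpha^{m_j}-\varepsilon_j)$, and then chooses the $\varepsilon_j>0$ so that the radii $\varepsilon_j^{1/m_j}$ are pairwise distinct. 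The same two-entry-per-block example also serves to show that $\det W(\mu)$ is a nonzero polynomial in the entries of $V^*$ for each fixed $\mu$, which is what you need to exclude the finitely many forbidden values. Finally, the M\"obius step you mention at the end is unnecessary here: for finite $\alpha$ the part $I-\alpha N$ of the regular block is always invertible, so infinite eigenvalues of $A-\lambda B$ play no role in this lemma.
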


\begin{proof}
1) Without loss of generality we may assume that $A-\lambda B$ is in KCF such
that the blocks $L_{m_1}(\lambda),\dots,L_{m_k}(\lambda)$ associated with the right minimal indices
appear first in the form. Then for each $\alpha\in\mathbb C$ different from the eigenvalues of
$A-\lambda B$ the columns of
\[
\left[\begin{array}{ccc}q_1(\alpha)&\dots&q_k(\alpha)\end{array}\right]=\left[\begin{array}{ccc}q_{11}(\alpha)&&0\\[-2mm]
& \ddots & \\ 0&&q_{kk}(\alpha)\\\hline 0&\dots&0\end{array}\right]
\quad\mbox{with }\; q_{jj}(\alpha)=\left[\begin{array}{c}1\\[-0.5mm] \alpha\\[-1.5mm] \vdots\\ \alpha^{m_j}\end{array}\right]
\]
form a basis for $\ker(A-\alpha B)$.
(When $\alpha$ is an eigenvalue of $A-\lambda B$, there are additional vectors in $\ker(A-\alpha B)$ since the rank of
$A-\alpha B$ drops below the normal rank $n-k$.)

We are looking for $z\ne 0$ and $\alpha$ such that
$V^*z=0$ and $(A-\alpha B)z=0$. Since we want $\alpha$ to be distinct from the eigenvalues of
$A-\lambda B$, the vector $z$ has to be of the form
\[z=c_1\,q_1(\alpha)+\cdots+c_k\,q_k(\alpha),\]
where $c=[c_1 \ \, \ldots \ \, c_k]^T\ne 0$. From $V^*z=0$ we get the equation
\begin{equation}\label{eq:gc}
G(\alpha)\,c=0,
\end{equation}
where
$G(\alpha)$ is a $k\times k$ matrix whose element $g_{ij}(\alpha)=v_i^*q_j(\alpha)$ is a polynomial in
$\alpha$ which generically with respect to the entries of $v_i^*$ will have degree $m_j$ for $i,j=1,\ldots,k$.
Equation \eqref{eq:gc} has a nontrivial solution if and only if
$\det G(\alpha)=0$, where $\det G(\alpha)$ is a polynomial in $\alpha$ which generically with respect to
the entries of $V^*$ is of degree $M$. Thus $\det G(\alpha)$ will have $M$ roots $\alpha_1,\dots,\alpha_M$
(counted with multiplicities).

On the other hand, for each fixed $\mu\in\mathbb C$, we have that
$\det G(\mu)$ is also a polynomial in the entries of $V^*=[v_1 \ \, \dots \ \, v_k]^*$.
For the particular choice
\[
v_1=e_1, \quad v_2=e_{m_1+2}, \quad \ldots, \quad v_k=e_{m_1+\cdots+m_{k-1}+k}
\]
we obtain that $v_i^*q_j(\mu)=\delta_{ij}$ so that $G(\mu)=I_k$ shows that $\det G(\mu)$ is a nonzero
polynomial in the entries of $V^*$. It thus follows that generically with respect to the entries of $V^*$
we will have $\det G(\mu)\neq 0$, and consequently the fixed value $\mu$ will generically not be among
the roots of $G(\alpha)$ as a polynomial in $\alpha$. Since the intersection of finitely many generic
sets is still generic, it follows that we can generically exclude finitely many values from the
zeros $\alpha_1,\dots,\alpha_M$ of $G(\alpha)$. This shows that generically with respect to the entries of $V^*$,
the values $\alpha_1,\dots,\alpha_M$ are different from the eigenvalues of $A-\lambda B$
and also from the given values $\gamma_1,\dots,\gamma_k$.

Next we show that the roots $\alpha_1,\dots,\alpha_M$ of $p(\alpha):=\det G(\alpha)$ generically are
pairwise distinct. This is exactly the case if the discriminant $\operatorname{Disc}(p)$ of $p$ is nonzero. Since
$\operatorname{Disc}(p)$ is a polynomial in the entries of $p$ (this is well known, but can also be seen from the fact
that the discriminant is a scalar multiple of the determinant of the Sylvester matrix $S(p,p')$ associated
with $p$ and its formal derivative $p'$), it follows that $\operatorname{Disc}(p)$ is
a polynomial with respect to the entries of $V^*$. It remains to show that $\operatorname{Disc}(p)$ is a nonzero
polynomial (because then we will have that $\operatorname{Disc}(p)\neq 0$ is a generic property with respect to
the entries of $V^*$), and for this it is enough to show that for a particular choice of the entries of $V$
we have that the values $\alpha_1,\dots,\alpha_M$ are pairwise distinct. Now taking
$v_1=e_{m_1+1}-\varepsilon_1 e_1$, $v_2=e_{m_1+m_2+2}-\varepsilon_2 e_{m_1+2}$, \dots,
$v_k=e_{m_1+\cdots+m_{k}+k}-\varepsilon_ke_{m_1+\cdots+m_{k-1}+k}$,
with $\varepsilon_1,\dots,\varepsilon_k>0$, we obtain that $v_i^*q_j(\alpha)=\delta_{ij}\alpha^{m_j}-\varepsilon_j$
and thus $G(\alpha)$ is diagonal and
\[
\det G(\alpha)= (\alpha^{m_1}-\varepsilon_1) \cdots (\alpha^{m_k}-\varepsilon_k).
\]
Since the roots of each factor $(\alpha^{m_j}-\varepsilon_j)$ are $m_j$ pairwise distinct complex numbers
lying on a circle centered at zero with radius $\varepsilon_j^{1/m_j}$, it remains to choose the values
$\varepsilon_1,\dots,\varepsilon_k$ in such a way that the $k$ radii are pairwise distinct to
obtain the desired example.

2) In a way similar to the one in 1) we can consider the left null space for $A-\alpha B$ and show
the existence of $\beta_1,\ldots,\beta_N$ and the corresponding nonzero vectors $w_1,\ldots,w_N$, where
now the statements are generic with respect to the entries of $U$. In particular, by interpreting $V$
as already fixed, this shows that generically with respect to the entries of $U$, the values
$\beta_1,\dots,\beta_N$ are not only different from the eigenvalues of $A-\lambda B$ and
$\gamma_1,\dots,\gamma_k$, but also from the values $\alpha_1,\dots,\alpha_M$ constructed in $1)$.

3) With the same notation as in 1) we now aim to solve the equations
\[
s_i=c_1q_1(\gamma_i)+\cdots+c_kq_k(\gamma_i)\quad\mbox{and}\quad V^*s_i=t_i,
\]
or, equivalently, $G(\gamma_i)c=t_i$ for $i=1,\dots,k$. Since $\gamma_i$ is different from the
values $\alpha_1,\dots,\alpha_M$, we have $\det G(\gamma_i)\neq 0$ and hence $G(\gamma_i)\,c=t_i$
is uniquely solvable for $c$ for $i=1,\dots,k$.
\end{proof}



The following theorem encapsulates the main result on the new eigenvalues of our perturbed pencil.

\begin{theorem}\label{thm:mainBor} Let $A-\lambda B$ be an $n\times n$ singular pencil of normal rank $n-k$
with left minimal indices $n_1,\dots,n_k$ and right minimal indices $m_1,\dots,m_k$.
Furthermore, let $U,V\in\mathbb C^{n,k}$ have full column rank
and let $D_A=\text{diag}(a_1,\dots,a_k),D_B=\text{diag}(b_1,\dots,b_k)\in\mathbb C^{k,k}$ be
such that $D_A-\lambda D_B$ is regular and such that all (not necessarily pairwise distinct) values
$\gamma_i:=\frac{a_i}{b_i}$, $i=1,\dots,k$,
are different from the eigenvalues of $A-\lambda B$. (Here, $\frac{a_i}{b_i}$ is interpreted as the
infinite eigenvalue, if $b_i=0$.) Finally, let $N:=n_1+\cdots+n_k$ and $M:=m_1+\cdots+m_k$.
Then generically with respect to the entries of $U$ and $V^*$, the following
statements hold:
\begin{enumerate}
\item The pencil~\eqref{pert1awithtau} has $M$ simple eigenvalues $\alpha_1,\ldots,\alpha_M$
    which are independent of $\tau\ne 0$, so that
    for each of these eigenvalues its right eigenvector $x_i$ is constant in $\tau\ne 0$ (up to scaling) and satisfies
    $V^* x_i=0$, while the left eigenvector $y_i$ is a linear function of $\tau$ (up to scaling) and
    satisfies $U^*y_i\ne 0$ for all $\tau\ne 0$.
\item The pencil~\eqref{pert1awithtau} has $N$ simple eigenvalues $\beta_1,\ldots,\beta_N$
    which are independent of $\tau\ne 0$, so that
    for each of these eigenvalues its left eigenvector $y_i$ is constant for $\tau\ne 0$ (up to scaling) and
    satisfies $U^* y_i=0$, while the right eigenvector $x_i$ is a linear function of $\tau$ (up to scaling)
    and satisfies $V^*x_i\ne 0$ for all $\tau\ne 0$.
\item For each $\tau\neq 0$ each $\gamma_i$ is an eigenvalue of~\eqref{pert1awithtau} with the same algebraic
    multiplicity as for the pencil $D_A-\lambda D_B$. Furthermore, the left and right null spaces
    ${\cal N}_l(\gamma_i)$ and ${\cal N}_r(\gamma_i)$ of~\eqref{pert1awithtau} associated with $\gamma_i$
    are constant in $\tau$. In addition, we have:
    \begin{enumerate}
    \item ${\cal N}_r(\gamma_i)\cap \ker(V^*)=\{0\}$, i.e., for each right eigenvector $x$ of~\eqref{pert1awithtau}
    associated with $\gamma_i$ we have $V^*x\ne 0$.
    \item ${\cal N}_l(\gamma_i)\cap \ker(U^*)=\{0\}$, i.e., for each left eigenvector $y$ of ~\eqref{pert1awithtau}
    associated with $\gamma_i$ we have $U^*y\ne 0$.
    \end{enumerate}
\end{enumerate}
(Note that the simplicity of the eigenvalues $\alpha_1,\dots,\alpha_M,\beta_1,\dots,\beta_N$ implies
that they are all different from the eigenvalues of $A-\lambda B$ and
$\gamma_1,\dots,\gamma_k$.)
\end{theorem}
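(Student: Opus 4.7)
The plan is to apply Lemma~\ref{lem:Lo} to construct explicit eigenvectors for each of the three classes of new eigenvalues, and then use a dimension count to establish simplicity and correct multiplicity. The key observation throughout is that the orthogonality conditions provided by the lemma make the perturbation term $\tau U(D_A - \lambda D_B)V^*$ vanish on the candidate vectors, so the very vectors witnessing the lemma become eigenvectors of the perturbed pencil \emph{for every} $\tau \neq 0$, with the desired $V^*$- or $U^*$-orthogonality built in.

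For statement~1, Lemma~\ref{lem:Lo}(1) generically yields $M$ pairwise distinct values $\alpha_1,\dots,\alpha_M$ (distinct from the eigenvalues of $A-\lambda B$ and from the $\gamma_j$) together with nonzero $z_i$ satisfying $(A-\alpha_i B)z_i=0$ and $V^*z_i=0$. The one-line substitution
\[
(\widetilde A - \alpha_i \widetilde B)z_i = (A-\alpha_i B)z_i + \tau\, U(D_A-\alpha_i D_B)(V^*z_i) = 0
\]
shows that $\alpha_i$ is an eigenvalue of the perturbed pencil for every $\tau\neq 0$, with right eigenvector $z_i$ constant in $\tau$ and satisfying $V^*z_i=0$. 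Statement~2 is symmetric, using Lemma~\ref{lem:Lo}(2) to obtain constant-in-$\tau$ left eigenvectors $w_j$ at each $\beta_j$ with $w_j^*U=0$. For statement~3, apply Lemma~\ref{lem:Lo}(3) with the canonical choice $t_i=e_i$ (the $i$-th standard basis vector of $\C^k$), producing $s_i$ with $(A-\gamma_iB)s_i=0$ and $V^*s_i=e_i$; the pivotal cancellation $(D_A-\gamma_iD_B)e_i=(a_i-\gamma_ib_i)e_i=0$ then gives $(\widetilde A-\gamma_i\widetilde B)s_i=0$ for every $\tau$, with $V^*s_i\neq 0$. The case $\gamma_i=\infty$ (i.e., $b_i=0$) is handled by the same M\"obius transformation argument as in Theorem~\ref{thm:nfp}, and the left-eigenvector statements are dual.

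A dimension count now settles simplicity and multiplicity. By Theorem~\ref{thm:nfp} the perturbed pencil is regular for generic $U,V^*$, hence has exactly $n$ eigenvalues counted with multiplicity. We have identified the $p$ original ones (preserved by Theorem~\ref{thm:nfp}), the $M$ new $\alpha_i$, the $N$ new $\beta_j$, and the $k$ $\gamma_i$ (with their multiplicities in $D_A-\lambda D_B$); genericity forces these four sets to be pairwise disjoint. Since the singular part of the Kronecker form of $A-\lambda B$ has size $M+N+k$, one has $p+M+N+k=n$, so every eigenvalue is accounted for, forcing simplicity of each $\alpha_i,\beta_j$ and matching multiplicity for each $\gamma_i$. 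The spaces $\mathcal{N}_r(\gamma_i)$ and $\mathcal{N}_l(\gamma_i)$ are constant in $\tau$ because the explicit eigenvectors above were constructed without $\tau$-dependence, and the intersection statement $\mathcal{N}_r(\gamma_i)\cap\ker(V^*)=\{0\}$ follows because any such $x$ would place $\gamma_i$ among the $\alpha_j$ by Lemma~\ref{lem:Lo}(1), contradicting disjointness.

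The main obstacle is the remaining structural claim that the ``opposite'' eigenvector (the left eigenvector at $\alpha_i$, the right eigenvector at $\beta_j$) is a linear function of $\tau$ up to scaling, together with the strict nonvanishing $U^*y_i\neq 0$ and $V^*x_j\neq 0$. The nonvanishing is short by contradiction: if $U^*y_i=0$ then $y_i^*(A-\alpha_iB)=0$ and $y_i^*U=0$, which by Lemma~\ref{lem:Lo}(2) would force $\alpha_i$ to be some $\beta_j$, already excluded. For the linearity, I would exploit the block triangular form
\[
P(\widetilde A-\lambda\widetilde B)Q = \begin{bmatrix} R(\lambda) & 0 \\ \tau\, C(\lambda) & R_{\text{new}}(\lambda) \end{bmatrix}
\]
from the proof of Theorem~\ref{thm:nfp}, whose left null equation at $\lambda=\alpha_i$ decouples into $\widetilde y_b^*R_{\text{new}}(\alpha_i)=0$ together with $\widetilde y_a^*=-\tau\,\widetilde y_b^*C(\alpha_i)R(\alpha_i)^{-1}$, revealing the linear $\tau$-dependence on the regular-block side; alternatively, make the direct ansatz $y_i(\tau)=y_0+\tau y_1$ in the original coordinates and, equating coefficients of $\tau^0,\tau^1,\tau^2$, reduce to the linear system $y_0^*(A-\alpha_iB)=0$, $y_1^*U=0$, $y_1^*(A-\alpha_iB)=-y_0^*U(D_A-\alpha_iD_B)V^*$, whose solvability uses $V^*z_i=0$ in an essential way. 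This last step is where the bulk of the remaining technical work lies.
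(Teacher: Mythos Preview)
Your treatment of existence (via Lemma~\ref{lem:Lo}), of the nonvanishing conditions $U^*y_i\neq 0$ and $V^*x_j\neq 0$, and of the prescribed eigenvalues $\gamma_i$ matches the paper's proof. Your simplicity argument via the global dimension count $p+M+N+k=n$ differs from the paper---which instead extracts simplicity of $\alpha_i$ from the Kronecker structure of the auxiliary pencil $G_i+\tau H_i:=(A-\alpha_iB)+\tau\,U(D_A-\alpha_iD_B)V^*$, viewed as a pencil in the variable~$\tau$---but your counting argument is valid and arguably cleaner.

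The genuine gap is the linearity-in-$\tau$ of the opposite eigenvector. Your block-triangular route does not work as stated: in the form from the proof of Theorem~\ref{thm:nfp},
\[
P(\widetilde A-\lambda\widetilde B)Q=\begin{bmatrix} R(\lambda)&0\\ \tau\,C(\lambda)&R_{\text{new}}(\lambda)\end{bmatrix},
\]
the lower-right block $R_{\text{new}}(\lambda)=S(\lambda)+\tau\,U_2(D_A-\lambda D_B)V_2^*$ itself depends on $\tau$, so the equation $\widetilde y_b^*R_{\text{new}}(\alpha_i)=0$ does not deliver a $\tau$-independent $\widetilde y_b$, and the claimed linearity of $[\widetilde y_a^*\ \widetilde y_b^*]$ does not follow. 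Your ansatz route is in fact exactly what the paper does, but solvability of the three-equation system for $(y_0,y_1)$ is the nontrivial step, and it does not follow from $V^*z_i=0$ in any direct way. The paper's key idea is to treat $G_i+\tau H_i$ as a \emph{matrix pencil in the variable $\tau$} and analyze its Kronecker canonical form: one shows $\text{nrank}(G_i,H_i)=n-1$, that the unique right minimal index equals $0$ (witnessed by $z_i$), that no left minimal index equals $0$ (this is precisely your $U^*y_i\neq 0$ contradiction), and then, by counting block sizes against $\text{rank}(G_i)=n-k$ and $\text{rank}(H_i)=k$, that the unique left minimal index must equal~$1$. A left minimal index equal to~$1$ is exactly the existence of a chain $w_i,z_i'$ satisfying your three equations, whence $y_i(\tau)=w_i+\tau z_i'$.
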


\begin{proof}
Without loss of generality, we may assume that the infinite eigenvalue is not among the eigenvalues of $D_A-\lambda D_B$.
Otherwise, we may as in the proof of Theorem~\ref{thm:nfp} apply a M\"obius
transformation to both $A-\lambda B$ and $D_A-\lambda D_B$ such that $D_A-\lambda D_B$ does not have
the eigenvalue $\infty$, apply the statement that was proved for this special situation, and finally
transform back with the inverse M\"obius transformation to obtain the desired result.

Observe that generically with respect to the entries of $U$ and $V^*$, the statements of Lemma~\ref{lem:Lo} 
hold, if we take the eigenvalues
of the pencil $D_A-\lambda D_B$
for the values $\gamma_1,\dots,\gamma_k$ and take standard basis vectors $e_1,\dots,e_k$ from $\mathbb C^k$ for
the vectors $t_1,\dots,t_k$. We now show 1)--3).

1) By Lemma~\ref{lem:Lo} there exist exactly $M$ pairwise distinct values
$\alpha_1,\ldots,\alpha_M$ different from the eigenvalues of $A-\lambda B$
and from $\gamma_1,\dots,\gamma_k$, and nonzero vectors $z_1,\ldots,z_M$ such that
$(A-\alpha_i B)z_i=0$ and $V^*z_i=0$ for $i=1,\ldots,M$. From this we obtain that
$(A-\alpha_i B + \tau \, UD_AV^* -\alpha_i \tau \, UD_BV^*)\,z_i=0$ which means that
$\alpha_i$ is an eigenvalue of \eqref{pert1awithtau} for $\tau\ne 0$ with
a right eigenvector $z_i$ that is invariant under $\tau$.

Considering now $\tau$ as a variable, it follows that the pencil 
\begin{equation}\label{eq:pengh}
G_i+\tau H_i:=(A-\alpha_i B) +\tau \, U(D_A-\alpha_iD_B)V^*\end{equation}
is singular. Suppose that ${\rm nrank}(G_i,H_i)=n-j$ for $j\ge 1$, which means that \eqref{eq:pengh} has $j$ right and $j$ left minimal indices.
We know from $G_iz_i=0$ and $H_iz_i=0$ that one 
right minimal index is equal to zero.
The remaining $j-1$ right minimal indices are all larger than zero, because otherwise there would exist
$y_i\in \ker(G_i)\cap\ker(H_i)$ linearly independent of $z_i$ which implies that
$\alpha_i$ would be a multiple eigenvalue of \eqref{pert1awithtau} in contradiction to Lemma~\ref{lem:Lo}.

Now suppose that \eqref{eq:pengh} has a left minimal index being zero.
Then there exists a vector $w_i\neq 0$ such that $w_i^*G_i=0$ and $w_i^*H_i=0$, which implies $w_i^*U=0$,
because $V$ has 
full rank and $D_A-\alpha_i D_B$ is nonsingular since 
$\alpha_i$ is different from the eigenvalues of $D_A-\lambda D_B$. But then by Lemma~\ref{lem:Lo} $\alpha_i$
is equal to one of the values $\beta_1,\dots,\beta_N$ which is a contradiction. Thus, all left minimal
 indices of~\eqref{eq:pengh} are larger than or equal to one.

Furthermore, we know that $\text{rank}(G_i)=n-k$ and $\text{rank}(H_i)=k$ since
$\alpha_i$ differs from all finite eigenvalues of $A-\lambda B$ and all eigenvalues of $D_A-\lambda D_B$. It follows that in the KCF of
the pencil \eqref{eq:pengh} there are at least $n-k-j$ blocks associated with the eigenvalue infinity
and at least $k-j$ blocks associated with the eigenvalue zero.

By a simple computation we obtain that the dimension of the KCF of \eqref{eq:pengh}
is at least $(n+j-1)\times (n+j-1)$; therefore the
only option is $j=1$ and hence \eqref{eq:pengh} has exactly one right minimal index (being zero) and
exactly one left minimal index, say $p$. Then another simple computation shows that the dimension
of the KCF of \eqref{eq:pengh} is at least $(n+p-1)\times (n+p-1)$ showing that the left minimal
index $p$ must be equal to one.
 Consequently, there exist linearly independent vectors
$w_i$ and $z_i$ such that
\begin{align*}
w_i^*(A-\alpha_iB)&=0,\\
z_i^*(A-\alpha_iB)+w_i^*U(D_A-\alpha_i D_B)V^*&=0,\\
z_i^*U(D_A-\alpha_i D_B)V^*&=0
\end{align*}
and $w_i^*U\ne 0$. Up to scaling, the left eigenvector $y_i$ of~\eqref{pert1awithtau}
associated with $\alpha_i$ then has the form
$y_i(\tau)=w_i+\tau z_i$
and is a linear function of $\tau$.

2) This follows completely analogously to 1).

3) Clearly, the standard basis vectors $e_1,\dots,e_k$ are eigenvectors of the pencil $D_A-\lambda D_B$
associated with the eigenvalues $\gamma_1,\dots,\gamma_k$. By Lemma~\ref{lem:Lo}, there exist
$k$ (necessarily linearly independent) vectors $s_1,\dots,s_k\in\mathbb C^n$ such that
$(A-\gamma_i B)s_i=0$ and $e_i=V^*s_i$ for $i=1,\dots,k$. Then  we have
\[
(\widetilde A-\gamma_i\widetilde B)s_i=(A-\gamma_i B)s_i+\tau \, U(D_A-\gamma_i D_B)V^*s_i=0
\]
for each $\tau\neq 0$ for $i=1,\dots,k$.
This implies that the values $\gamma_1,\dots,\gamma_k$ are eigenvalues of $\widetilde A-\lambda\widetilde B$
with the same algebraic multiplicities as for $D_A-\lambda D_B$. Furthermore, it follows that the null space
${\cal N}_r(\gamma_i)$ does not depend on $\tau$ and by construction we have
${\cal N}_r(\gamma_i)\cap \ker(V^*)=\{0\}$.

By applying Lemma~\ref{lem:Lo} to the pencil $A^*-\lambda B^*$ we obtain the analogous statements for
the left null spaces ${\cal N}_l(\gamma_i)$.
\end{proof}

\begin{summary} \rm\label{rm:3groups}
Summarizing the results from Theorem~\ref{thm:nfp} and Theorem~\ref{thm:mainBor}, let
$A-\lambda B$ be an $n\times n$ singular pencil of normal rank $n-k$
 with left minimal indices $n_1,\dots,n_k$ and right minimal indices $m_1,\dots,m_k$,
and let $U$, $V$, $D_A$, $D_B$, $N$, and $M$ be as in Theorem~\ref{thm:mainBor}. Since the regular part of $A-\lambda B$
then has size $r:=n-N-M-k$ and we have found $N+M+k$ new eigenvalues in Theorem~\ref{thm:mainBor}, we
have classified all eigenvalues of the perturbed pencil
\[
\widetilde A-\lambda\widetilde B:=A-\lambda B+\tau \, (UD_AV^*-\lambda \, UD_BV^*)
\]
into the following three groups:
\begin{enumerate}
\item \emph{True eigenvalues}: There are $r$ such eigenvalues that are exactly the eigenvalues
of $A-\lambda B$. The corresponding right eigenvectors $x$ and left eigenvectors $y$ satisfy
$V^*x=0$ and $U^*y=0$.
\item \emph{Prescribed eigenvalues}: There are $k$ such eigenvalues that coincide with the $k$ eigenvalues
of $D_A-\lambda D_B$. The corresponding right eigenvectors $x$ and left eigenvectors $y$ satisfy
both $V^*x\neq 0$ and $U^*y\neq 0$.
\item \emph{Random eigenvalues}: These are the remaining $N+M$ eigenvalues. They are simple and if $\mu$ is
such an eigenvalue with the corresponding right eigenvector $x$ and
left eigenvector $y$,
then we either have $V^*x=0$ and $U^*y\neq 0$, or $V^*x\neq 0$ and $U^*y=0$.
\end{enumerate}
Thus, the eigenvalues of $A-\lambda B$ can be identified from the eigenvalues of $\widetilde A-\lambda\widetilde B$
by investigating orthogonality properties of the corresponding left and right eigenvectors.
We will use this observation in the following section for the development of an algorithm for
computing the eigenvalues of a singular square pencil.
\end{summary}

\begin{remark}\rm\label{rm:symmpert2} If $A$ and $B$ are symmetric, then it seems that for our current approach
we have to 
use nonsymmetric rank completing perturbations.
Namely, when a symmetric perturbation of the form $\tau \, U(D_A-\lambda D_B)U^*$ is used,
there is an issue with the third group in Summary~\ref{rm:3groups}
as random eigenvalues appear either as double real eigenvalues or in complex conjugate pairs,
and in the former case the orthogonality constraints cannot be satisfied.
We leave the study of structured singular pencils for future research.
\end{remark}


\section{A perturbation method for singular generalized eigenvalue problems}
\label{sec:method}
In this section we explain how in the generic case we can extract the finite
true eigenvalues numerically even in double precision by solving only one perturbed eigenvalue problem.
The key is formed by the existent or non-existent orthogonality properties of the
left and right eigenvectors
associated with true, prescribed, and random eigenvalues, respectively.


Let $A-\lambda B$ be a singular $n\times n$ pencil with normal rank $n-k$, where $k>0$.
We determine $\text{nrank}(A,B)$ by computing $\text{rank}(A-\zeta B)$ for a random $\zeta$.
As we have shown in the previous section, if
we take two random $n\times k$ matrices $U$ and $V$ with orthonormal
columns, a regular $k\times k$ diagonal pencil $D_A-\lambda D_B$, and $\tau\ne 0$, then
the perturbed pencil \eqref{pert1awithtau} is regular. The ``true'' eigenvalues of $A-\lambda B$
(theoretically) remain constant under this perturbation.
In contrast, eigenvalues that originate from the singular part
of the pencil (the ``random'' eigenvalues)
may be ``anywhere in the complex plane''. In addition, \eqref{pert1awithtau} also has $k$ ``prescribed"
eigenvalues that coincide with the eigenvalues of $D_A-\lambda D_B$.

In theory, if we compute all eigenvalues $\lambda_i$ together with
the left and right eigenvectors $x_i$ and $y_i$ for $i=1,\ldots,n$ of
\eqref{pert1awithtau}, then $\max(\|V^*x_i\|,\|U^*y_i\|)=0$ for
a true eigenvalue and $\max(\|V^*x_i\|,\|U^*y_i\|)> 0$ for
a prescribed or a random eigenvalue, so we can use this criterion
to extract the true eigenvalues.
In the following we will discuss how the above criterion is affected by computations
in finite precision and how it does depend on $\tau$. We will
also introduce other criteria that may be used for the same purpose or
to further separate true eigenvalues into finite and infinite ones.

If $x_i$ and $y_i$ are normalized left and right eigenvectors of
the perturbed problem \eqref{pert1awithtau} for an eigenvalue $\lambda_i$, we can compute the
number
\begin{equation}
\label{eq:si}
s(\lambda_i)={y_i^*\widetilde Bx_i}=y_i^*Bx_i + \tau \, y_i^*UD_BV^*x_i.
\end{equation}
It is easy to see that $s(\lambda_i)\ne 0$ for a simple finite eigenvalue $\lambda_i$.
As explained in the following lemma, which is a straightforward
generalization of the standard result for a pencil $A-\lambda I$, see, e.g., \cite[Sec.~2.9]{WilAEP}, 
$1/|s(\lambda_i)|$ occurs in the expression for a standard condition number of a simple finite eigenvalue.

\begin{lemma} Let $\lambda_i$ be a simple finite eigenvalue of a regular matrix pencil $\widetilde A-
\lambda \widetilde B$ and
let $x_i$ and $y_i$ be its normalized left and right eigenvectors. If we perturb the pencil into
$(\widetilde A+\theta E)-\lambda(\widetilde B+\theta F)$ for a small $\theta>0$, then $\lambda_i$ perturbs into
\begin{equation}\label{eq:obcutljivost}
\lambda_i+\theta\, {y_i^* E x_i -\lambda_i y_i^*F x_i\over s(\lambda_i)}+{\cal O}(\theta^2).
\end{equation}
\end{lemma}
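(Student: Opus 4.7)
The plan is to reproduce the standard first-order perturbation argument for a simple eigenvalue of a regular pencil, adapted to the generalized setting. Since $\lambda_i$ is a simple finite eigenvalue of the regular pencil $\widetilde A - \lambda \widetilde B$, the characteristic polynomial $\det(\widetilde A + \theta E - \lambda(\widetilde B + \theta F))$, viewed as a polynomial in $\lambda$ whose coefficients depend analytically on $\theta$, has a simple root at $(\theta,\lambda)=(0,\lambda_i)$. By the implicit function theorem there is an analytic branch $\lambda_i(\theta)$ with $\lambda_i(0)=\lambda_i$, and by standard analytic perturbation theory for simple eigenvalues (see, e.g., Kato) there is an accompanying analytic right eigenvector branch $x_i(\theta)$ with $x_i(0)=x_i$.

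Next I would differentiate the identity
\[
\bigl((\widetilde A + \theta E) - \lambda_i(\theta)(\widetilde B + \theta F)\bigr)\, x_i(\theta) = 0
\]
with respect to $\theta$ and evaluate at $\theta = 0$, obtaining
\[
\bigl(E - \lambda_i'(0)\, \widetilde B - \lambda_i F\bigr)\, x_i + (\widetilde A - \lambda_i \widetilde B)\, x_i'(0) = 0.
\]
Left-multiplying by the left eigenvector $y_i^*$ and using $y_i^*(\widetilde A - \lambda_i \widetilde B) = 0$ eliminates the unknown derivative $x_i'(0)$ and yields
\[
y_i^* E x_i - \lambda_i\, y_i^* F x_i - \lambda_i'(0)\, y_i^* \widetilde B x_i = 0.
\]
Since $\lambda_i$ is a simple finite eigenvalue, one has $s(\lambda_i) = y_i^* \widetilde B x_i \neq 0$ (this is the standard fact that for a simple finite eigenvalue the left and right eigenvectors cannot be jointly $\widetilde B$-orthogonal; it can be verified by reducing the pencil to Weierstrass form on the two-dimensional deflating subspace corresponding to $\lambda_i$). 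Solving for $\lambda_i'(0)$ gives
\[
\lambda_i'(0) = \frac{y_i^* E x_i - \lambda_i\, y_i^* F x_i}{s(\lambda_i)},
\]
and a Taylor expansion $\lambda_i(\theta) = \lambda_i + \theta\, \lambda_i'(0) + \mathcal O(\theta^2)$ then delivers \eqref{eq:obcutljivost}.

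The only subtle step is justifying the existence of the analytic branches $\lambda_i(\theta)$ and $x_i(\theta)$; everything after that is a one-line computation. Since $\lambda_i$ is simple and finite and the unperturbed pencil is regular, this follows from the implicit function theorem applied to the characteristic polynomial for the eigenvalue, and from the fact that the $(n-1)$-dimensional range of $\widetilde A - \lambda_i \widetilde B$ together with $\widetilde B x_i$ spans $\mathbb C^n$ (which is exactly the non-degeneracy condition $s(\lambda_i)\neq 0$) for the eigenvector. No further machinery is required, and the result is independent of whether $\widetilde A - \lambda \widetilde B$ arose from a rank-completing perturbation of a singular pencil.
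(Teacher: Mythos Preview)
Your argument is correct and is precisely the standard first-order perturbation computation: differentiate the eigenvalue equation, left-multiply by $y_i^*$ to kill the $x_i'(0)$ term, and solve using $s(\lambda_i)\neq 0$. The paper itself does not supply a proof of this lemma; it simply states it as ``a straightforward generalization of the standard result for a pencil $A-\lambda I$'' and refers to Wilkinson \cite[Sec.~2.9]{WilAEP}. So there is nothing to compare against---you have written out exactly the generalization the authors had in mind.
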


If $\lambda_i$ is a simple finite true eigenvalue, then $V^*x_i=0$ and
$U^*y_i=0$, which implies that
$s(\lambda_i)=y_i^*Bx_i$ does not change
with $\tau\ne 0$.
For a regular infinite eigenvalue we have $y_i^* B=0$, $Bx_i=0$, $V^*x_i=0$, and
$U^*y_i=0$, therefore $s(\infty)=0$, again independent of
$\tau\ne 0$.
On the other hand, we can show that values $s(\lambda)$ of prescribed
and random eigenvalues depend on $\tau$ and go to $0$ as
$\tau$ goes to 0. For this, we need the following lemma.

\begin{lemma}\label{lem:znano}
Let $A-\lambda B$ be a singular pencil and let $\alpha$ be different
from all eigenvalues of $A-\lambda B$, i.e.,
$\text{rank}(A-\alpha B)={\rm nrank}(A,B)$. If
$y^*(A-\alpha B)=0$ and $(A-\alpha B)x=0$ then
$y^*Ax=y^*Bx=0$.
\end{lemma}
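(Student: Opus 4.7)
The plan is to reduce to the Kronecker canonical form (KCF) provided by Theorem~\ref{thm:knf} and then exploit its block structure directly. First, I would write $P(A-\lambda B)Q = \text{diag}(R(\lambda), S(\lambda))$ with $R(\lambda)$ regular and $S(\lambda)$ purely singular, set $\widetilde x := Q^{-1}x$ and $\widetilde y^* := y^* P^{-1}$, and partition both conformably as $\widetilde x = \bigl[\widetilde x_R^\top, \widetilde x_S^\top\bigr]^\top$ and $\widetilde y = \bigl[\widetilde y_R^\top, \widetilde y_S^\top\bigr]^\top$. The hypothesis $\text{rank}(A-\alpha B) = \text{nrank}(A,B)$ says precisely that $R(\alpha)$ is nonsingular. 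Consequently $(A-\alpha B)x = 0$ forces $\widetilde x_R = 0$ and $y^*(A-\alpha B) = 0$ forces $\widetilde y_R = 0$, so the problem has been pushed entirely into the singular part.

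Next, I would analyze $S(\lambda) = \text{diag}(L_{m_1}(\lambda), \ldots, L_{m_k}(\lambda), L_{n_1}(\lambda)^\top, \ldots, L_{n_\ell}(\lambda)^\top)$. The key observation is that at any fixed scalar $\alpha$, each ``wide'' block $L_{m_i}(\alpha)$ has full row rank---hence trivial left null space---while each ``tall'' block $L_{n_j}(\alpha)^\top$ has full column rank---hence trivial right null space. It follows that the components of $\widetilde x_S$ corresponding to the $L_{n_j}^\top$-blocks must be zero, and those of $\widetilde y_S^*$ corresponding to the $L_{m_i}$-blocks must be zero; the nonzero parts of $\widetilde x_S$ and $\widetilde y_S^*$ therefore live on disjoint KCF-block index sets.

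Finally, with $PAQ = \text{diag}(A_R, A_S)$ and $PBQ = \text{diag}(B_R, B_S)$, we have $y^* A x = \widetilde y_R^* A_R \widetilde x_R + \widetilde y_S^* A_S \widetilde x_S$, and the first summand vanishes by what was just shown. The matrix $A_S$ is itself block diagonal with blocks of the form $[0 \;\, I_{m_i}]$ and $[0 \;\, I_{n_j}]^\top$; expanding the remaining term $\widetilde y_S^* A_S \widetilde x_S$ as a sum over these blocks, each summand has either its $\widetilde y_S$-factor or its $\widetilde x_S$-factor equal to zero by the disjoint-support property. Hence $y^* A x = 0$, and an identical argument with $B_S$ (whose blocks are $[I_{m_i} \;\, 0]$ and $[I_{n_j} \;\, 0]^\top$) yields $y^* B x = 0$.

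I expect no substantial obstacle---the argument is essentially bookkeeping in the KCF. The only thing to watch is the careful alignment of the partitions of $\widetilde x$, $\widetilde y$, $A_S$, and $B_S$ with the individual singular blocks, and the recognition that the constant and $\lambda$-coefficient matrices of each $L_j$-block share the row/column support pattern that makes these bilinear forms vanish. As a side remark, once one of the two identities is established, the other follows from $y^*(A-\alpha B)x = 0$ (immediate from either hypothesis) whenever $\alpha \neq 0$; this halves the work but is unnecessary within the KCF approach above.
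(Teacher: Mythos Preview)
Your proposal is correct but takes a different route from the paper. The paper does not pass explicitly to block-diagonal KCF; instead it uses the chain description of null vectors coming from singular blocks: any $x$ in the right null space at $\alpha$ can be written as $x=q_1+\alpha q_2+\cdots+\alpha^p q_{p+1}$ with $Aq_1=0$, $Aq_{j+1}=Bq_j$, $Bq_{p+1}=0$, and similarly $y=w_1+\overline\alpha w_2+\cdots+\overline\alpha^r w_{r+1}$ with a left chain. It then shows $w_i^*Bq_j=0$ for all $i,j$ by an index-shifting induction, $w_i^*Bq_j=w_i^*Aq_{j+1}=w_{i-1}^*Bq_{j+1}$, which eventually reaches a boundary relation $w_1^*A=0$ or $Bq_{p+1}=0$; the identity $y^*Ax=\alpha\, y^*Bx$ then finishes. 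Your argument is more structural: once in KCF, you observe that at a non-eigenvalue $\alpha$ each $L_{m_i}(\alpha)$ has trivial left null space and each $L_{n_j}(\alpha)^\top$ has trivial right null space, so the supports of $\widetilde x_S$ and $\widetilde y_S$ lie on disjoint diagonal blocks of $A_S$, $B_S$, making the bilinear forms vanish blockwise. Your approach is shorter and avoids the induction; the paper's chain argument is a bit more coordinate-free and makes the mechanism (walking along Jordan-like chains) explicit. Either is a clean proof.
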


\begin{proof}
We know from the structure of the left and right singular blocks that
$x\in {\cal N}_r(\alpha)$ can be written as
$x=q_1+\alpha q_2 +\cdots + \alpha^p q_{p+1}$,
where the vectors $q_1,\ldots,q_{p+1}$ form the chain
\[ Aq_1 =0,\ Aq_2 = Bq_1,\ \ldots,\ Aq_{p+1}=Bq_p,\ Bq_{p+1}=0\]
for certain $p\ge 0$.
Similarly,
$y\in {\cal N}_l(\alpha)$ can be written as
$y=w_1+\overline \alpha w_2 +\cdots + \overline\alpha^r w_{r+1}$,
where the vectors $w_1,\ldots,w_{r+1}$ form the chain
\[
A^*w_1  =0,\  A^*w_{2} = B^*w_1,\ \ldots,\ A^*w_{r+1} = B^*w_r,\ B^*w_{r+1}=0
\]
for certain $r\ge 0$.
To show $y^*Bx=0$ it is enough to show
that $w_i^*Bq_j=0$ for all $i=1,\ldots,r+1$ and $j=1,\ldots,p+1$.
For $i=1$ or $j=p+1$ this follows from $w_1^*A=0$ and $Bq_{p+1}=0$, so we can assume that $i\ge 2$ and $j\le p$. It follows that $w_i^*Bq_j=w_i^*Aq_{j+1}=w_{i-1}^*Bq_{j+1}$. As we continue
in this manner, we eventually reach either $w_1^*A=0$ or $Bq_{p+1}=0$. It follows that $y^*Bx=0$ and from $y^*Ax=\alpha y^*Bx$ we get
$y^*Ax=0$ as well.
\end{proof}

\begin{lemma}\label{lem:spol} Let $\lambda_i$ be a prescribed or random eigenvalue of
\eqref{pert1awithtau} under the assumptions of Theorem~{\rm\ref{thm:mainBor}}, where we assume in addition
that all prescribed eigenvalues are algebraically simple.
Then there exists a positive constant $c_i$ such that
$|s(\lambda_i)|={c_i |\tau|}$.
\end{lemma}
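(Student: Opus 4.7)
My plan is to do a case analysis based on which of the three groups from Theorem~\ref{thm:mainBor} the eigenvalue $\lambda_i$ belongs to, and in each case to exploit the $\tau$-dependence of the corresponding left/right eigenvector together with Lemma~\ref{lem:znano} to show that the part of $s(\lambda_i) = y_i^* B x_i + \tau\, y_i^* U D_B V^* x_i$ that is constant in $\tau$ vanishes.

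For a random eigenvalue of ``$\alpha$-type'' (part~1 of Theorem~\ref{thm:mainBor}) I would use $V^* x_i = 0$ to kill the second summand in $s(\lambda_i)$, and then write the left eigenvector in the form $y_i = w_i + \tau z_i$ with $w_i^*(A-\alpha_i B)=0$, as constructed in the proof of Theorem~\ref{thm:mainBor}. Since $\alpha_i$ differs from all eigenvalues of $A-\lambda B$ and both $w_i$ and $x_i$ lie in the left and right null spaces of $A-\alpha_i B$, Lemma~\ref{lem:znano} yields $w_i^* B x_i = 0$, leaving $s(\lambda_i) = \tau\, z_i^* B x_i$. The ``$\beta$-type'' random case (part~2 of Theorem~\ref{thm:mainBor}) is handled symmetrically: $U^* y_i = 0$ kills the second summand, and after writing $x_i = s_i + \tau s_i'$ with $(A-\beta_i B) s_i = 0$, Lemma~\ref{lem:znano} removes $y_i^* B s_i$ and leaves $s(\lambda_i) = \tau\, y_i^* B s_i'$. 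For a prescribed eigenvalue $\gamma_i$ (part~3 of Theorem~\ref{thm:mainBor}), both $x_i$ and $y_i$ are $\tau$-independent and lie in right/left null spaces of $A-\gamma_i B$, so Lemma~\ref{lem:znano} directly yields $y_i^* B x_i = 0$, and hence $s(\lambda_i) = \tau\, y_i^* U D_B V^* x_i$.

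Thus in all three cases $s(\lambda_i) = \tau \rho_i$ for a scalar $\rho_i$ not depending on $\tau$, and setting $c_i := |\rho_i|$ gives $|s(\lambda_i)| = c_i|\tau|$. The positivity $c_i > 0$ follows from the fact that, by Theorem~\ref{thm:mainBor} together with the additional simplicity hypothesis on prescribed eigenvalues, $\lambda_i$ is a simple finite eigenvalue of the regular perturbed pencil $\widetilde A-\lambda\widetilde B$, for which it is standard that $y_i^* \widetilde B x_i \neq 0$; therefore $\rho_i \neq 0$ for any fixed $\tau \neq 0$, and the ratio $\rho_i = s(\lambda_i)/\tau$ is itself independent of $\tau$ by the argument above. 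The main obstacle I anticipate is the bookkeeping in the two random-eigenvalue cases: correctly identifying which ``leading'' component of the $\tau$-linear eigenvector plays the role of the left- or right-null-space vector required by Lemma~\ref{lem:znano}. Once this is set up, the prescribed case and the final positivity assertion are immediate; a minor additional care is needed if any $\gamma_i$ is infinite, which can be handled either by the symmetric argument with the roles of $D_A$ and $D_B$ interchanged or, as in the proof of Theorem~\ref{thm:nfp}, by a preliminary Möbius transformation.
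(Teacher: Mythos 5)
Your argument is correct and follows essentially the same route as the paper's proof: in each of the three cases (prescribed, random of $L_p$-type, random of $L_p^\top$-type) you use the $\tau$-affine structure of the relevant eigenvector from the proof of Theorem~\ref{thm:mainBor} together with Lemma~\ref{lem:znano} to kill the $\tau$-independent part of $s(\lambda_i)$, and then invoke simplicity of $\lambda_i$ in the regular pencil $\widetilde A-\lambda\widetilde B$ for the positivity of $c_i$. The only cosmetic deviation is that $y_i^*=(w_i+\tau z_i)^*$ contributes a factor $\overline\tau$ rather than $\tau$, but this is immaterial after taking absolute values.
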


\begin{proof}
First, let $\lambda_i$ be a prescribed eigenvalue. Then by the proof of Theorem~\ref{thm:mainBor} the
corresponding left and right eigenvectors satisfy $y_i^*(A-\lambda_i B)=0$ and $(A-\lambda_i B)x_i=0$
which by Lemma~\ref{lem:znano} implies $y_i^*Bx_i=0$. But then we have
$|s(\lambda_i)|=c_i|\tau|$ with $c_i=|y_i^*UD_BV^*x_i|$ and $c_i$ must be nonzero, because $\lambda_i$ is
a simple eigenvalue of $\widetilde A-\lambda\widetilde B$ for $\tau\neq 0$.

Next, let $\lambda_i$ be a random eigenvalue, such that $V^*x_i=0$ and $U^*y_i\ne 0$.
We know (see the proof of Theorem~\ref{thm:mainBor}) that $y_i$ is a linear function of $\tau$ as
$y_i(\tau)=w_i+\tau z_i$, where
$w_i^*(A-\lambda_i B)=0$. Since $(A-\lambda_i B)x_i=0$, it follows
from Lemma \ref{lem:znano} that $w_i^*Bx_i=0$ and
$y_i^*\widetilde Bx_i=y_i^*Bx_i=\overline \tau z_i^*Bx_i$.
The case $V^*x_i\ne 0$ and $U^*y_i= 0$ can be shown analogously.
\end{proof}

So, if we take a $\tau$ of small absolute value and if all finite true eigenvalues are simple and none of them is too ill-conditioned,
then we can separate the finite true eigenvalues from the remaining ones
using
the values $s(\lambda)$.

Let $\varepsilon$ be the machine precision and let the
matrices $A$ and $B$ be scaled in such way that $\|A\|=\|B\|=1$.
If all finite true eigenvalues are simple and not too ill-conditioned,
then we expect to observe the situation in Table~\ref{tab1},
where $c>0$ is a constant, independent of $\tau$, and possibly different for each eigenvalue and each entry in the table.

\begin{center}
\begin{table}[h!]
\caption{Characteristics of the eigenvalues of the perturbed pencil as in~\eqref{pert1awithtau}.}
\vspace{-2mm}
\hspace*{1.2cm}\begin{tabular}{l|l|l|l} \hline
Eigenvalue $\lambda$ & $|s(\lambda)|$ & $\|V^*x\|$ & $\|U^*y\|$ \\
\hline \rule{0pt}{2.3ex}%
Finite true eigenvalue of $A-\lambda B$& $c$ & $< c \,\varepsilon/|\tau|$ & $< c \,\varepsilon/|\tau|$ \\
Infinite true eigenvalue of $A-\lambda B$ & $< c \,\varepsilon$ & $< c \,\varepsilon/|\tau|$ & $< c \,\varepsilon/|\tau|$
\\
Prescribed eigenvalue of $D_A-\lambda D_B$ & $c \, |\tau|$ & $ c$ & $ c$\\
Random eigenvalue from an $L_p$ block & $ c \, |\tau|$ & $< c \,\varepsilon/|\tau|$ & $ c$\\
Random eigenvalue from an $L_p^T$ block & $ c \, |\tau|$ & $ c$ & $< c \,\varepsilon/|\tau|$\\
\hline
\end{tabular}
\label{tab1}
\end{table}
\end{center}

We now explain the values in Table~\ref{tab1}.
We will start with column $|s(\lambda)|$ and a finite true eigenvalue, where we assume that all
finite true eigenvalues are simple and well-conditioned.
It follows that $\lambda$ is a simple eigenvalue of $\widetilde A-\lambda \widetilde B$, therefore $y^*\widetilde Bx\ne 0$ and,
since this value is independent of $\tau$ and $\varepsilon$, we have $|s(\lambda)|=c$.
For an infinite eigenvalue we should have $y^*\widetilde Bx=0$ in
 exact computation, instead, in finite precision, we get $|y^*\widetilde Bx|< c \,\varepsilon$.
Finally, in the generic case, if $\lambda_i$ is a prescribed or random eigenvalue then Lemma~\ref{lem:spol} yields that
$|s(\lambda_i)|=c_i \, |\tau|$ for a positive constant $c_i$.

Finally, 
the values in the columns $\|V^*x\|$ and $\|U^*x\|$ that are
marked by $<c \,\varepsilon/|\tau|$ should be zero in exact arithmetic.
In finite precision however, due to the supposed backward stability of the applied eigenproblem solver, the
computed eigenvalues and eigenvectors of $\widetilde A-\lambda \widetilde B$
are exact eigenpairs of a perturbed pencil
$\widetilde A + E-\lambda \, (\widetilde B+F)$, where $\|E\|\le \widetilde c_1\,\|\widetilde A\|\,\varepsilon$
and $\|F\|\le \widetilde c_2\,\|\widetilde B\|\,\varepsilon$.
If we assume that all finite eigenvalues of $\widetilde A-\lambda \widetilde B$ are simple,
then we have the following result on the first-order eigenvector perturbations.
The proof is omitted since it is a straightforward
generalization of the result for the pencil $A-\lambda I$ from \cite[Sec.~2.10]{WilAEP}.

\begin{lemma}\label{lem:pertvecab}
Let all finite eigenvalues $\lambda_i$ of $\widetilde A-\lambda \widetilde B$ be simple and let
$x_i$ and $y_i$ be corresponding left and right normalized eigenvectors. If
the pencil is perturbed into $\widetilde A + \theta E-\lambda \, (\widetilde B+\theta F)$, then
the eigenvector $x_i$ perturbs into 
\[
\widetilde x_i=x_i+\theta \sum_{k=1, k\ne i}^n {y_k^*(E-\lambda_i F)x_i\over (\lambda_i-\lambda_k)\,s(\lambda_k)}\,x_k+
{\cal O}(\theta^2).
\]
\end{lemma}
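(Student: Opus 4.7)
The plan is to follow the classical first-order perturbation argument in the style of Wilkinson, using an expansion in the unperturbed eigenbasis together with biorthogonality of left and right eigenvectors with respect to $\widetilde{B}$. Concretely, under the simplicity hypothesis the right eigenvectors $x_1,\dots,x_n$ span $\mathbb{C}^n$ (restricting to the finite spectrum, which is where the formula lives), and the standard biorthogonality reads $y_j^*\widetilde{B}x_k = s(\lambda_k)\,\delta_{jk}$ whenever $\lambda_j,\lambda_k$ are finite. These are the two structural facts that make the whole computation go through.

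First I would postulate analytic expansions
\[
\widetilde{\lambda}_i(\theta) = \lambda_i + \theta\mu_i + \mathcal{O}(\theta^2), \qquad
\widetilde{x}_i(\theta) = x_i + \theta v_i + \mathcal{O}(\theta^2),
\]
and expand the correction as $v_i = \sum_{k\ne i} d_k\, x_k$ (the component along $x_i$ is fixed by a normalization convention and may be taken to be zero at first order). Then I would substitute into the perturbed eigenproblem $(\widetilde{A}+\theta E)\widetilde{x}_i(\theta) = \widetilde{\lambda}_i(\theta)(\widetilde{B}+\theta F)\widetilde{x}_i(\theta)$, use $\widetilde{A}x_k = \lambda_k\widetilde{B}x_k$ to cancel the zeroth-order terms, and collect the coefficient of $\theta$. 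This yields the first-order identity
\[
\sum_{k\ne i} d_k(\lambda_k - \lambda_i)\,\widetilde{B}x_k \;=\; \mu_i\,\widetilde{B}x_i + \lambda_i F x_i - E x_i.
\]

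Next I would project onto each left eigenvector by left-multiplying with $y_j^*$ for $j \ne i$. By biorthogonality all terms on the left vanish except the one with $k=j$, and the $\mu_i$-term on the right vanishes because $y_j^*\widetilde{B}x_i = 0$. This isolates
\[
d_j (\lambda_j - \lambda_i)\, s(\lambda_j) \;=\; y_j^*(\lambda_i F - E)\,x_i,
\]
which rearranges to exactly $d_j = y_j^*(E - \lambda_i F)x_i \big/ [(\lambda_i - \lambda_j)\, s(\lambda_j)]$, as claimed. (As a byproduct, projecting with $y_i^*$ recovers the companion formula~\eqref{eq:obcutljivost} for the eigenvalue perturbation, giving a consistency check.)

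The main obstacle is really a bookkeeping issue rather than a conceptual one: if $\widetilde{A}-\lambda\widetilde{B}$ possesses infinite eigenvalues, the corresponding $s(\lambda_k)$ vanish and the formula as written is not directly applicable to those indices. The standard remedy is to treat infinite eigenvalues separately via the reverse pencil (swapping the roles of $\widetilde{A}$ and $\widetilde{B}$) and to interpret the sum as ranging over finite $\lambda_k$, which matches the scope of the hypothesis that ``all finite eigenvalues are simple''. Beyond that, one must also justify analyticity of $\widetilde{\lambda}_i(\theta)$ and $\widetilde{x}_i(\theta)$ near $\theta=0$, but this is immediate from simplicity of $\lambda_i$ and the analytic implicit function theorem applied to the characteristic equation, exactly as in \cite[Sec.~2.10]{WilAEP}.
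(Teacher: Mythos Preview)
Your argument is correct and is precisely the Wilkinson-style expansion the paper has in mind; the paper itself omits the proof entirely, remarking only that it is ``a straightforward generalization of the result for the pencil $A-\lambda I$ from \cite[Sec.~2.10]{WilAEP}.'' Your observation about the bookkeeping issue with infinite eigenvalues (where $s(\lambda_k)=0$) is a fair caveat that the paper does not make explicit.
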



Let $\lambda_i$ be a finite true eigenvalue of $A-\lambda B$. Then $\lambda_i$ is also
an eigenvalue of $\widetilde A-\lambda\widetilde B$ and $V^*x_i=0$, where $x_i$ is an exact
normalized right eigenvector. In finite precision, $x_i$ becomes perturbed in the directions of other
eigenvectors and by Lemma~\ref{lem:pertvecab} a contribution in the direction of another
eigenvector depends on the condition number of the corresponding eigenvalue. The only contributions
that affect the value of $\|V^*\widetilde x_i\|$ are those related to prescribed eigenvalues or
random eigenvalues from left singular blocks, as right eigenvectors of other eigenvalues are orthogonal
to $V$. As condition numbers of these eigenvalues are equal to $1/(c\,|\tau|)$ and
$\|V^*x_j\|=c$ for the corresponding right eigenvectors, it follows from
Lemma \ref{lem:pertvecab} and the backward stability of the computed eigenpairs that
$\|V^*\widetilde x_i\|< c \, \varepsilon/|\tau|$.

Next, we discuss appropriate 
choices for the value $\tau$.
If 
$|\tau|$ is close to $\varepsilon$, then the prescribed and random eigenvalues are very
ill-conditioned, and perturbations of eigenvectors may move the values of $\|V^*x_i\|$ and $\|U^*y_i\|$
far away from zero 
when they should be close to zero. Therefore, if $|\tau|$ is too small, we may not be able to use
the values of $\|V^*x_i\|$ and $\|U^*y_i\|$ to extract the true eigenvalues. Still, if all finite true eigenvalues
are simple, then we may use the values $|s(\lambda_i)|$ to extract the finite true eigenvalues.

On the other hand, if $|\tau|$ is large,
then all eigenvalues, except the infinite ones, are expected to be well-conditioned which means
that the eigenvectors will not change much and the computed left and right eigenvectors will be
orthogonal to $V$ or $U$ in finite precision, when they should be. Therefore, for large $|\tau|$, we can first use
$\max(\|V^*x_i\|,\|U^*y_i\|)$ to extract the true eigenvalues and then use $|s(\lambda_i)|$ to distinguish
the finite true eigenvalues from the infinite one. In practice, we see this as a better option,
because it does not depend on finite true eigenvalues being simple.
However, we should not choose $|\tau|$ too large as this may decrease the precision of
the computed finite true eigenvalues. Since the computed eigenvalues
are, due to assumed backward
stability, exact eigenvalues of a slightly perturbed pencil $\widetilde A-\lambda\widetilde B$, it is safe to use $|\tau|$ up to
$\|\widetilde A\|\approx \|A\|$ and $\|\widetilde B\|\approx \|B\|$.
Also, since from our analysis it follows that only the absolute value of $\tau$
seems to matter, we suggest to choose $\tau$ real and positive.


%

Based on the above discussion, we summarize our method in Algorithm~1. Note that we scale the matrices in such way that $\|A\|_1=\|B\|_1=1$, mainly for convenience, to determine an appropriate default value
for $\tau$.

\noindent\vrule height 0pt depth 0.5pt width \textwidth \\
{\bf Algorithm~1: Computing finite eigenvalues of a singular pencil $(A,B)$}
by a rank-completing perturbation. \\[-3mm]
\vrule height 0pt depth 0.3pt width \textwidth \\
{\bf Input:} $A$ and $B$, perturbation constant $\tau$ (default $10^{-2}$),
thresholds $\delta_1$ (default $\varepsilon^{1/2}$)
and $\delta_2$ (default $10^{2}\,\varepsilon$).\\
{\bf Output:} Eigenvalues of the finite regular part. \\
\begin{tabular}{ll}
{\footnotesize 1:} & Scale $A=(1/\alpha)A$ and $B=(1/\beta )B$, where $\alpha=\|A\|_1$ and $\beta=\|B\|_1$. \\
{\footnotesize 2:} & Compute $\text{nrank}(A,B)$:
$k = \text{rank}(A-\zeta B)$ for random $\zeta$. \\
{\footnotesize 3:} & Select random $n\times k$ matrices $U$ and $V$ with orthonormal columns. \\
{\footnotesize 4:} & Select diagonal $k\times k$ matrices $D_A$ and $D_B$ such that the \\
& eigenvalues of $(D_A, D_B)$ are (likely) different from those of $(A,B)$ \\
& (default: choose diagonal elements of $D_A$ and $D_B$ uniformly random  \\ & from the interval $[1,2]$).\\
{\footnotesize 5:} & Compute the eigenvalues $\lambda_i$, $i=1,\ldots,n$, and right and left\\ & eigenvectors $x_i$ and $y_i$ of $(\widetilde A,\widetilde B)=(A+\tau \, UD_AV^*,\, B+\tau \, UD_BV^*).$ \\
{\footnotesize 6:} & Compute
$s_i=y_i^*\widetilde Bx_i$ for $i=1,\ldots,n$.\\
{\footnotesize 7:} & Compute $\zeta_i=\max(\|V^*x_i\|, \, \|U^*y_i\|)$
for $i=1,\ldots,n$.\\
{\footnotesize 8:} & Return all eigenvalues $(\alpha/\beta)\lambda_i$, 
$i=1,\ldots,n$, where $\zeta_i<\delta_1$ and $|s_i|>\delta_2$.
\end{tabular} \\
\vrule height 0pt depth 0.5pt width \textwidth
\medskip

As we will show by experiments in the next section,
the above approach seems to work very well in double precision for small or moderate singular pencils.
Of course, if some of the eigenvalues are very ill-conditioned (for instance when some of the
eigenvalues are multiple), then the method may fail in extracting some of the finite true
eigenvalues. However, its advantage over staircase-based methods may be the following observation:
if we make a wrong rank decision in a staircase algorithm, then the method usually fails completely and returns no eigenvalues at all;
see Example~\ref{ex:num3} in the next section.
In contrast, the method proposed here is able to detect, if not all, then at least the well-conditioned finite
eigenvalues of the pencil under consideration.


\section{Numerical examples}\label{sec:num} In this section we demonstrate the method with several numerical examples computed
 in Matlab 2015b.
 All numerical examples and implementations of the algorithms are available in \cite{MultiParEig}.


\begin{example}\rm\label{ex:num1} We take a $7\times 7$ pencil $A-\lambda B$, where
\[A=\smath{\left[\begin{array}{rrrrrrr}
  -1 & -1 & -1 & -1 & -1 & -1 & -1 \\
  1 & 0 & 0 & 0 & 0 & 0 & 0 \\
  1 & 2 & 1 & 1 & 1 & 1 & 1 \\
  1 & 2 & 3 & 3 & 3 & 3 & 3 \\
  1 & 2 & 3 & 2 & 2 & 2 & 2 \\
  1 & 2 & 3 & 4 & 3 & 3 & 3 \\
  1 & 2 & 3 & 4 & 5 & 5 & 4\end{array}\right]},\quad
B=\smath{\left[\begin{array}{rrrrrrr}
  -2 & -2 & -2 & -2 & -2 & -2 & -2 \\
  2 & -1 & -1 & -1 & -1 & -1 & -1 \\
  2 & 5 & 5 & 5 & 5 & 5 & 5 \\
  2 & 5 & 5 & 4 & 4 & 4 & 4 \\
  2 & 5 & 5 & 6 & 5 & 5 & 5 \\
  2 & 5 & 5 & 6 & 7 & 7 & 7 \\
  2 & 5 & 5 & 6 & 7 & 6 & 6\end{array}\right]}.
\]
The matrices are built in such way that
the KCF of the pencil contains blocks of all four possible types,
${\rm nrank}(A,B)=6$ and the pencil is singular. Its
KCF has blocks $J_1(1/2)$, $J_1(1/3)$, $N_1$, $L_1$, and $L_2^T$. If we apply Algorithm~1, we get
the values in the following table. Note that values $\lambda_k$ in the first column are values from Line 8 of Algorithm 1, which are scaled back to match the eigenvalues of the original matrix pencil $A-\lambda B$ whose matrices are scaled in Line 1 since they do not satisfy
$\|A\|_1=\|B\|_1=1$.

\vspace{1mm}
\begin{center}
{\footnotesize \begin{tabular}{r|clll} \hline
$k$ & $\lambda_k$ & $\quad \ |s_k|$ & $\ \|V^*x_k\|$ & $\ \|U^*y_k\|$ \\
\hline \rule{0pt}{2.3ex}%
1 & 0.333333 & $1.5\cdot 10^{-2}$ & $1.3\cdot 10^{-15}$ & $1.3\cdot 10^{-14}$ \\
2 & 0.500000 & $9.5\cdot 10^{-4}$ & $1.3\cdot 10^{-14}$ & $1.9\cdot 10^{-14}$ \\
3 & $\infty$ & $3.8\cdot 10^{-19}$ & $2.8\cdot 10^{-15}$ & $1.3\cdot 10^{-14}$  \\
4 & $-0.244794 + 0.421723i$ & $7.8\cdot 10^{-3}$ & $5.8\cdot 10^{-2}$ & $5.6\cdot 10^{-15}$ \\
5 & $-0.244794 - 0.421723i$ & $7.8\cdot 10^{-3}$ & $5.8\cdot 10^{-2}$ & $5.6\cdot 10^{-15}$  \\
6 & 0.383682 & $2.1\cdot 10^{-4}$ & $2.6\cdot 10^{-2}$ & $4.2\cdot 10^{-1}$ \\
7 & 0.478292 & $2.6\cdot 10^{-4}$ & $9.2\cdot 10^{-15}$ & $5.2\cdot 10^{-1}$ \\
\hline
\end{tabular}}
\end{center}
\smallskip

The values in the table follow the pattern from the previous section and it is easy to detect that $\lambda_1$ and $\lambda_2$
are finite true eigenvalues, $\lambda_3$ is a true infinite eigenvalue, $\lambda_4,\lambda_5$, and $\lambda_7$ are random eigenvalues, and $\lambda_6$ is the prescribed eigenvalue.
\end{example}

\begin{example}\rm\label{ex:dk3} We take
example C3 from \cite{DKa88} that comes from control theory and belongs to a set of examples C1, C2, and C3,
where each has successively more ill-conditioned eigenvalues. The pencil has the form
\[A-\lambda B =
\smath{\left[\begin{array}{rrrrr}
  1 & -2 & 100 & 0 & 0 \\
  1 & 0 & -1 & 0 & 0 \\
  0 & 0 & 0 & 1 & -75  \\
  0 & 0 & 0 & 0 & 2\end{array}\right]}
-\lambda \, \smath{\left[\begin{array}{rrrrr}
  0 & 1 & 0 & 0 & 0 \\
  0 & 0 & 1 & 0 & 0 \\
  0 & 0 & 0 & 1 & 0  \\
  0 & 0 & 0 & 0 & 1\end{array}\right]}.
\]
Its KCF contains blocks $L_2$, $J_1(1)$, and $J_1(2)$.
As the pencil is rectangular, we add a zero line to make it square.
This adds an $L_0^T$ block to the KCF.
Algorithm~1 returns the following table for $A-\lambda B$, from which the finite true
eigenvalues $1$ and $2$ can be extracted.

\vspace{1mm}
\begin{center}
{\footnotesize \begin{tabular}{r|clll} \hline
$k$ & $\lambda_k$ & $\quad \ |s_k|$ & $\ \|V^*x_k\|$ & $\ \|U^*y_k\|$ \\
\hline \rule{0pt}{2.3ex}%
1 & 1.000000 & $1.2\cdot 10^{-2}$ & $1.7\cdot 10^{-15}$ & $1.9\cdot 10^{-15}$ \\
2 & 2.000000 & $1.2\cdot 10^{-2}$ & $2.3\cdot 10^{-15}$ & $1.7\cdot 10^{-15}$ \\
3 & $-0.693767 + 1.563033i$ & $2.1\cdot 10^{-2}$ & $2.3\cdot 10^{-15}$ & $5.0\cdot 10^{-1}$  \\
4 & $-0.693767 - 1.563033i$ & $2.1\cdot 10^{-2}$ & $2.3\cdot 10^{-15}$ & $5.0\cdot 10^{-1}$  \\
5 & 78.673901 & $2.8\cdot 10^{-3}$ & $3.3\cdot 10^{-1}$ & $6.4\cdot 10^{-1}$ \\
\hline
\end{tabular}}
\end{center}
\smallskip

As in \cite{DKa88} we add some noise and perturb initial
$A-\lambda B$ into $\widehat A-\lambda \widehat B$
by adding $10^{-6}\,{\tt rand}(4,5)$ to $A$ and $B$. True
eigenvalues of $\widehat A - \lambda \widehat B$
 can still be extracted by Algorithm~1 if we adjust the
parameter $\delta_1$. The values we get are in the following table:

\vspace{1mm}
\begin{center}
{\footnotesize \begin{tabular}{r|rlll} \hline
$k$ & $\lambda_k \quad \ $ & $\quad \ |s_k|$ & $\ \|V^*x_k\|$ & $\ \|U^*y_k\|$ \\
\hline \rule{0pt}{2.3ex}%
1 & 0.999990 & $7.6\cdot 10^{-3}$ & $2.6\cdot 10^{-15}$ & $5.2\cdot 10^{-7}$ \\
2 & 2.000058 & $7.6\cdot 10^{-3}$ & $2.8\cdot 10^{-15}$ & $5.6\cdot 10^{-7}$ \\
3 & 101.850555 & $8.2\cdot 10^{-4}$ & $1.3\cdot 10^{-1}$ & $3.7\cdot 10^{-1}$  \\
4 & $-14.308508$ & $1.9\cdot 10^{-2}$ & $5.8\cdot 10^{-16}$ & $4.5\cdot 10^{-1}$  \\
5 & 15.734162 & $9.3\cdot 10^{-3}$ & $1.0\cdot 10^{-17}$ & $4.7\cdot 10^{-1}$ \\
\hline
\end{tabular}}
\end{center}
\end{example}

\medskip 
\begin{example}\rm\label{ex:em} This is an example from \cite[Sec.~5]{EM},
where the staircase algorithm fails to find a regular subspace of proper size under a small random perturbation. We take
\[A_1-\lambda B_1=
\smath{\left[\begin{array}{cccc} 0 & 0 & 1 & 0\cr 0 & 0 & 0 & 1\cr 0 & 0 & 0 & 0\end{array}\right]} -\lambda \,
\smath{\left[\begin{array}{cccc} \delta & 0 & 0 & 0\cr 0 & \delta & 0 & 0\cr 0 & 0 & 1 & 0\end{array}\right]},
\]
where $\delta=1.5\cdot 10^{-8}$. The KCF structure of the pencil is  $J_2(0)$ and $L_1$ which means that $0$ is a double eigenvalue.
It is reported in \cite{EM} that if we add a random perturbation of
size $10^{-14}$ to the pencil, then Guptri reports the regular part $J_1(0)$ and we
have been able to confirm this using a Matlab implementation
of Guptri in \cite{MCS}. If we enlarge the perturbation to $10^{-11}$, Guptri returns
no regular part at all, while Algorithm~1 returns two finite true eigenvalues $\lambda_1$ and $\lambda_2$ from the following table.

\vspace{1mm}
\begin{center}
{\footnotesize \begin{tabular}{r|llll} \hline
$k$ & $\qquad \ \lambda_k$ & $\quad \ |s_k|$ & $\ \|V^*x_k\|$ & $\ \|U^*y_k\|$ \\
\hline \rule{0pt}{2.3ex}%
1 & $-1.4306543\cdot 10^{-3}$ & $1.6\cdot 10^{-11}$ & $5.5\cdot 10^{-17}$ & $6.7\cdot 10^{-10}$ \\
2 & $\phantom-9.9599790\cdot 10^{-4}$ & $1.6\cdot 10^{-11}$ & $0.0$ & $6.7\cdot 10^{-10}$ \\
3 & $-2.2641370\cdot 10^{7}$ & $5.2\cdot 10^{-9}$ & $2.9\cdot 10^{-18}$ & $2.6\cdot 10^{-6}$  \\
4 & $\phantom-1.1878888\cdot 10^0$ & $1.6\cdot 10^{-3}$ & $2.0\cdot 10^{-1}$ & $7.8\cdot 10^{-1}$  \\
\hline
\end{tabular}}
\end{center}
\end{example}

\medskip 
\begin{example}\rm\label{ex:num3} We take the singular pencil $\Delta_1-\lambda \Delta_0$
of size $300\times 300$ from
\cite[Ex.~18]{MPl14}. This example is related to two random matrices
$A$ and $B$ of size $10\times 10$ in a way that the true eigenvalues of $\Delta_1-\lambda \Delta_0$
are exactly the values $\lambda$ such that $A+\lambda B$ has a multiple eigenvalue (see Section~\ref{sec:double}).
We know from the
properties of the problem that there are 90 such values $\lambda$ and that the KCF of $\Delta_1-\lambda \Delta_0$
contains $100$ $N_1$ and 10 left and 10 right singular blocks. The conjecture from \cite{MPl14}
is that the singular blocks are 5 $L_4^T$, 5 $L_5^T$, 5 $L_5$, and 5 $L_6$ blocks.

This example is also available as
\verb|demo_double_eig_mp| in toolbox MultiParEig \cite{MultiParEig}. The staircase algorithm in
MultiParEig fails to extract the finite regular part of size 90 in double precision, but manages
to extract all 90 finite true eigenvalues using quadruple precision and the Multiprecision Computing Toolbox \cite{Advanpix}.
If we apply Algorithm~1 to $\Delta_1-\lambda \Delta_0$ in double precision, we get the following values:

\vspace{1mm}
\begin{center}
{\footnotesize \begin{tabular}{c|clll} \hline
$k$ & $\lambda_k$ & $\quad \ |s_k|$ & $\ \|V^*x_k\|$ & $\ \|U^*y_k\|$ \\
\hline \rule{0pt}{2.3ex}%
1 & $0.508999 + 2.016378i$ & $3.0\cdot 10^{-3}$ & $2.3\cdot 10^{-14}$ & $1.6\cdot 10^{-14}$ \\
$\vdots$ & $\vdots$ & $\quad \ \vdots$ & $\quad \ \vdots$ & $\quad \ \vdots$ \\
89 & $4.266290-0.925962i$ & $1.4\cdot 10^{-6}$ & $2.6\cdot 10^{-13}$ & $7.2\cdot 10^{-14}$ \\
90 & $-0.628208$ & $3.2\cdot 10^{-7}$ & $2.8\cdot 10^{-14}$ & $1.3\cdot 10^{-11}$ \\
91 & $\infty$ & $1.1\cdot 10^{-17}$ & $7.1\cdot 10^{-15}$ & $7.1\cdot 10^{-15}$ \\
$\vdots$ & $\vdots$ & $\quad \ \vdots$ & $\quad \ \vdots$ & $\quad \ \vdots$ \\
190 & $\infty$ & $2.8\cdot 10^{-21}$ & $5.9\cdot 10^{-15}$ & $7.9\cdot 10^{-15}$ \\
191 & $-6.276934$ & $3.2\cdot 10^{-7}$ & $2.7\cdot 10^{-14}$ & $4.5\cdot 10^{-5}$ \\
$\vdots$ & $\vdots$ & $\quad \ \vdots$ & $\quad \ \vdots$ & $\quad \ \vdots$ \\
300 & 7.125982 & $2.3\cdot 10^{-5}$ & $1.7\cdot 10^{-1}$ & $1.0\cdot 10^{-2}$ \\
\hline
\end{tabular}}
\end{center}
\vspace{1mm}
From the columns $\|V^*x_k\|$ and $\|U^*y_k\|$ we get $\max_{k=1,\ldots,190}(\max(\|V^*x_k\|,\|U^*y_k\|))=1.3\cdot 10^{-11}$
and $\min_{k=191,\ldots,300} \max(\|V^*x_k\|,\|U^*y_k\|)=4.5\cdot 10^{-5}$,
which shows a clear gap which separates true eigenvalues from the prescribed and random ones.
Next, in the set of true eigenvalues
there is also a clear gap between $s_{90}$ and $s_{91}$ which separates finite true eigenvalues from infinite ones, since
$\min_{k=1,\ldots,90}|s_k|=3.2\cdot 10^{-7}$ and
$\max_{k=91,\ldots,190}|s_k|=1.1\cdot 10^{-17}$.
\end{example}

\section{The singular two-parameter eigenvalue problem}\label{sec:2EP}
We now expand on Section~\ref{sub:sing2EP}.
In a two-parameter eigenvalue problem (2EP) \cite{Atk72} we have the equations
\begin{align}
(A_1+\lambda B_1 +\mu C_1) \, \bx_1 &= \zero,\nonumber\\[-3mm]
  \label{problem} \\[-3mm]
(A_2+\lambda B_2 +\mu C_2) \, \bx_2 &= \zero, \nonumber
\end{align}
where $A_1$, $B_1$, and $C_1$ are of size $n_1 \times n_1$,
and $A_2$, $B_2$, and $C_2$ are of size $n_2 \times n_2$.
Sought are scalars $\lambda, \mu$ and nonzero vectors $\bx_1$ and $\bx_2$
such that \eqref{problem} is satisfied. We say
that $(\lambda,\mu)$ is an eigenvalue of the 2EP and the tensor product $\bx_1 \otimes \bx_2$
is the corresponding eigenvector. Define the operator determinants
\begin{align}
  \Delta_0&=B_1\otimes C_2-C_1\otimes B_2,\nonumber\\
  \Delta_1&=C_1\otimes A_2-A_1\otimes C_2,\label{D012}\\
  \Delta_2&=A_1\otimes B_2-B_1\otimes A_2\nonumber.
\end{align}
Then problem \eqref{problem} is related to a coupled pair of GEPs
\begin{align}
  \Delta_1 \, \bz &=\lambda \, \Delta_0 \, \bz,\nonumber\\[-3mm]
  \label{Dgep}\\[-3mm]
  \Delta_2 \, \bz &=\mu \, \Delta_0 \, \bz \nonumber
\end{align}
for a decomposable tensor $\bz = \bx_1 \otimes \bx_2$.
If $\Delta_0$ is nonsingular, then Atkinson~\cite{Atk72} shows that
the solutions of \eqref{problem} and \eqref{Dgep} agree and the
matrices $\Delta_0^{-1}\Delta_1$ and $\Delta_0^{-1}\Delta_2$ commute.
In the nonsingular case the 2EP \eqref{problem} has
$n_1n_2$ eigenvalues and it can be solved with a variant
of the QZ algorithm on \eqref{Dgep}; see \cite{HKP05}.

It turns out that for many problems occurring in practice
both pencils $(\Delta_1, \Delta_0)$ and $(\Delta_2, \Delta_0)$ are singular and we
have a singular 2EP \cite{MPl09}.
Applications include delay-differential equations \cite{JHo09},
quadratic two-parameter eigenvalue problems \cite{MPl10,HMP12},
model updating \cite{Cot01},
and roots of systems of bivariate polynomials \cite{PHo16,BDD17}.

The eigenvalues of a singular 2EP \eqref{problem} are the finite regular
eigenvalues of \eqref{Dgep}; see Section~\ref{sub:sing2EP}.
There exists a staircase type algorithm that works on both
singular pencils \eqref{Dgep} simultaneously and extracts finite regular eigenvalues;
see \cite{MPl10} and an implementation in \cite{MultiParEig}. However, as illustrated in
Examples~\ref{ex:em} and \ref{ex:num3}, a staircase algorithm may fail. In this section
we propose an alternative method that may be applied to a singular 2EP,
which in some cases finds finite regular eigenvalues when the staircase algorithm fails,
while in some other cases the situation is exactly the opposite.

We can apply Algorithm~1 to $\Delta_1 z = \lambda \Delta_0 z$, one of the two singular pencils in \eqref{Dgep},
to compute the $\lambda_i$ components of eigenvalues $(\lambda_i,\mu_i)$. This is, however, only half of
the required information and for each $\lambda_i$ we have to find the corresponding $\mu_i$.
Subsequently, we insert $\lambda=\lambda_i$ into \eqref{problem} and search for
common eigenvalues $\mu$ of a pair of pencils
$(A_1-\lambda_i B_1)- \mu C_1$ and $(A_2-\lambda_i B_2)- \mu C_2$
that may be singular as well.
We detect the common eigenvalues by comparing the sets of computed eigenvalues for the first and the second pencil,
for which we use Algorithm~1 again. The overall method is given in Algorithm~2.

\noindent\vrule height 0pt depth 0.5pt width \textwidth \\
{\bf Algorithm~2: Computing finite regular eigenvalues of a singular 2EP} \\[-3mm]
\vrule height 0pt depth 0.3pt width \textwidth \\
{\bf Input:} Matrices $A_1,B_1,C_1,A_2,B_2,C_2$ from \eqref{problem}
which provide $\Delta_1$ and $\Delta_0$ from \eqref{D012};
threshold $\delta$ (default $\delta=\varepsilon^{1/2}$), and parameters for Algorithm~1.\\
{\bf Output:} Finite regular eigenvalues of \eqref{problem}. \\
\begin{tabular}{ll}
{\footnotesize 1:} & Compute finite eigenvalues $\lambda_1,\ldots,\lambda_r$ of
$\Delta_1 - \lambda \Delta_0$ using Algo.~1.\\
{\footnotesize 2:} & {\bf for} $j=1,\ldots,r$\\[-1mm]
{\footnotesize 3:} & \quad Compute eigenvalues $\mu_1^{(1)},\ldots,\mu_{m_1}^{(1)}$ of $(A_1-\lambda_j B_1)-\mu C_1$ using Algo.~1.\\
{\footnotesize 4:} & \quad Compute eigenvalues $\mu_1^{(2)},\ldots,\mu_{m_2}^{(2)}$ of $(A_2-\lambda_j B_2)-\mu C_2$ using Algo.~1.\\
{\footnotesize 5:} & \quad Reorder eigenpairs:
 $|\mu_1^{(1)}-\mu_1^{(2)}|\le 
 \cdots \le |\mu_m^{(1)}-\mu_m^{(2)}|$
for $m=\min(m_1,m_2)$.\\
{\footnotesize 6:} & \quad {\bf for} $k=1,\ldots,m$\\
{\footnotesize 7:} & \hbox{}\quad\quad {\bf if} $|\mu_k^{(1)}-\mu_k^{(2)}|< \delta$ {\bf then} add
  $(\lambda_j, \, \frac{1}{2}(\mu_k^{(1)}+\mu_k^{(2)}))$ to list of eigenvalues.\\
\end{tabular} \\
\vrule height 0pt depth 0.5pt width \textwidth
\medskip

Some remarks about Algorithm~2 are in order.
\begin{itemize}
\item If we know that each eigenvalue has a unique $\lambda$ component, then we can replace Lines 6 and 7 by selecting
  $(\lambda_j,{1\over 2}(\mu_1^{(1)}+\mu_1^{(2)}))$
  regardless of the difference $|\mu_1^{(1)}-\mu_1^{(2)}|$.
\item If $n_1=n_2=n$ then the complexity of Line 1 is ${\cal O}(n^6)$ while the complexity of
Lines 2 to 7 is at most ${\cal O}(n^5)$ in case $r={\cal O}(n^2)$.
\end{itemize}

\begin{example}\rm Consider a system of bivariate polynomials
(cf.~\cite[Exs.~5.3, 6.2, 6.4]{PHo16})
\begin{align*}
p_1(\lambda,\mu) & = 1 + 2\lambda + 3\lambda + 4\lambda^2 +
5\lambda \mu + 6\mu^2 + 7\lambda^3 + 8\lambda^2\mu +
9\lambda\mu^2 + 10\mu^3 = 0,\\
p_2(\lambda,\mu) & = 10 + 9\lambda + 8\mu + 7\lambda^2 + 6\lambda\mu +
 5\mu^2 + 4\lambda^3 + 3\lambda^2\mu + 2\lambda\mu^2 + \mu^3 = 0.
\end{align*}
Using a uniform determinantal representation from \cite{BDD17}, we
write the above system as a 2EP of the form
\begin{align*}
A_1+\lambda B_1+\mu C_1 &=\smath{\left[\begin{array}{ccccc}
   0 & 0 & 4 + 7\lambda & 1 & 0\\
   0 & 5 + 8\lambda & 2 & -\lambda & 1 \\
   6 + 9\lambda + 10\mu & 3 & 1 & 0 & -\lambda\\
   1 & -\mu & 0 & 0 & 0\\
   0 & 1 & -\mu & 0 & 0 \end{array}\right]}, \\
A_2+\lambda B_2+\mu C_2 &=\smath{\left[\begin{array}{ccccc}
   0 & 0 & 7 + 4\lambda & 1 & 0\\
   0 & 6 + 3\lambda & 9 & -\lambda & 1 \\
   5 + 2\lambda + \mu & 8 & 10 & 0 & -\lambda\\
   1 & -\mu & 0 & 0 & 0\\
   0 & 1 & -\mu & 0 & 0 \end{array}\right]},
\end{align*}
where $p_i(\lambda,\mu)=\det(A_i+\lambda B_i+\mu C_i)$ for $i=1,2$.
The obtained 2EP is singular and has 9 regular eigenvalues
$(\lambda_j,\mu_j)$ which are exactly the 9 solutions of the initial polynomial system.

If we apply Algorithm~2 to the above problem, we get all 9 solutions. In Line~2 we compute first components
$\lambda_1,\ldots,\lambda_9$ as finite eigenvalues of the corresponding singular pencil $\Delta_1-\lambda \Delta_0$ from
\eqref{Dgep}, whose KCF contains 4~$L_0$, 4~$L_0^T$, 2~$N_4$, 1~$N_2$, 2~$N_1$, and 9~$J_1$ blocks.
For each $\lambda_j$ we compute the candidates for $\mu_j$ in Lines 4 and 5, where
the KCF of singular pencils $(A_i-\lambda_jB_i)-\mu C_i$
contains 1~$N_2$ and $3$~$J_1$ blocks for $i=1,2$ and $j=1,\ldots,9$.

We remark that the above approach might also fail, in particular if we apply it to systems of bivariate polynomials of high degree.
Some of the eigenvalues of $\Delta_1-\lambda \Delta_0$ might be so ill-conditioned
that the algorithm cannot separate them from the infinite eigenvalues.
In such a case a possible solution would be to apply computation in higher precision, using, e.g.,
the Multiprecision Computing Toolbox \cite{Advanpix}.
\end{example}

\section{Conclusions}
\label{sec:concl}
We have proposed a method to approximate the finite eigenvalues of a singular
pencil by means of a \emph{rank-completing perturbation}.
The use of such a perturbation ensures that, generically, the finite and infinite eigenvalues remain fixed,
while there appear newly generated eigenvalues. For many problems we can well distinguish the
original eigenvalues from the newly created ones by considering
the angles of the eigenvectors with respect to the perturbation spaces,
and at the condition numbers of the eigenvalues.
Thus, this method may be useful for a wide range of
applications.

The proposed method could be an alternative
to 
the class of staircase algorithms, such as e.g., Guptri~\cite{Guptri} or a staircase type algorithm
for singular two-parameter eigenvalue problems \cite{MPl10} in \cite{MultiParEig}.
These methods can be rapid and accurate, 
however, the key part of staircase techniques are a number of rank decisions,
which can be difficult and ill-posed, see e.g., \cite{EM} and
Examples~\ref{ex:em} and \ref{ex:num3}.
In some cases, when these methods fail to return even a single eigenvalue,
the newly proposed method may still compute all or at least some of the eigenvalues.

A code for the approach developed in this paper is available in \cite{MultiParEig}.

\medskip\noindent{\bf Acknowledgments:}
The authors would like to thank Stefan Johansson for providing a beta
version of Matrix Canonical Structure (MCS) Toolbox \cite{MCS} which
includes a Matlab implementation of Guptri and is an important alternative
to the original Guptri \cite{Guptri} that we can no
longer use in Matlab due to the 32-bit limitation. Furthermore, the authors
would like to warmly thank two anonymous referees for their careful reading and
many expert suggestions and comments on a previous version of this paper.

\smallskip\noindent{\bf Genealogical acknowledgment:}
During this research project the first two authors found out that they are
twelfth cousins. Christian and Michiel thank their common ancestors
Caspar H\"olterhoff (1552--1625) and Catharina Teschemacher (ca.~1555--1639)
for making this possible.

\vspace{-4mm}


\begin{thebibliography}{10}

\bibitem{Atk72}
{\sc F.~V.~Atkinson}, {\em Multiparameter {E}igenvalue {P}roblems}, Academic
 Press, New York, 1972.

\bibitem{BDD17}
{\sc A.~Boralevi, J.~van Doornmalen, J.~Draisma, M.~E.~Hochstenbach, and
 B.~Plestenjak}, {\em Uniform determinantal representations}, SIAM J.~Appl.~Algebra Geometry,
 1 (2017), pp.~415--441.

\bibitem{BenLMV15}
{\sc P.~Benner, P.~Losse, V.~Mehrmann, and M.~Voigt},
{\em Numerical Linear Algebra Methods for Linear Differential-Algebraic Equations}.
In: A.~Ilchmann, T.~Reis (eds), Surveys in Differential-Elgebraic Equations III.
Differ.-Algebr.~Equa.~Forum, pp.~117--175, Springer, Cham, 2015.

\bibitem{BHM98}
{\sc R.~Byers, C.~He, and V.~Mehrmann}, {\em Where is the nearest non-regular
  pencil?}, Linear Algebra Appl., 285 (1998), pp.~81--105.

\bibitem{Cot01}
{\sc N.~Cottin}, {\em Dynamic model updating---a multiparameter eigenvalue
  problem}, Mechanical systems and signal processing, 15 (2001), pp.~649--665.

\bibitem{DavW74}
{\sc E.J.~Davison and S.H.~Wang}, {\em Properties and calculation of transmission
zeros of linear multivariable systems}, Automatica, 10
(1974), pp.~643--658.

\bibitem{DD07}
{\sc F.~De~Ter{\'a}n and F.~M.~Dopico},
{\em Low rank perturbation of Kronecker structures without full rank}, SIAM J.~Matrix Anal.~Appl.,
  29 (2007), pp.~496--529.

\bibitem{DD08}
\leavevmode\vrule height 2pt depth -1.6pt width 23pt, {\em A note on generic Kronecker orbits of
matrix pencils with fixed rank}, SIAM J.~Matrix Anal.~Appl.,
 30 (2008), pp.~491--496.


\bibitem{DDM08}
{\sc F.~De~Ter{\'a}n, F.~M.~Dopico, and J.~Moro}, {\em First order spectral
  perturbation theory of square singular matrix pencils}, Linear Algebra Appl.,
  429 (2008), pp.~548--576.

\bibitem{Dem00}
{\sc J.~Demmel}, {\em Generalized Non-{H}ermitian Eigenproblems}. Section 2.6 in: Z.~Bai,
J.~Demmel, J.~Dongarra, A.~Ruhe, and H.~van der Vorst (eds.),
{\em Templates for the Solution of Algebraic Eigenvalue Problems: a practical guide},
pp.~28--36, SIAM, Philadelphia, 2000.

\bibitem{DKa87}
{\sc J.~Demmel and B.~K{\aa}gstr{\"o}m}, {\em Computing stable eigendecompositions
of matrix pencils}, Linear Algebra Appl.,
  88/89 (1987), pp.~139--186.

\bibitem{DKa88}
\leavevmode\vrule height 2pt depth -1.6pt width 23pt,
 {\em Accurate solutions of ill-posed problems in control theory}, SIAM
 J.~Matrix.~Anal.~Appl., 9 (1988), pp.~126--145.

\bibitem{DKa93}
\leavevmode\vrule height 2pt depth -1.6pt width 23pt,
 {\em The generalized {S}chur
  decomposition of an arbitrary pencil {A}$-\lambda${B}--robust software with
  error bounds and applications. {P}art {I}: theory and algorithms}, ACM Trans.
  Math.~Software, 19 (1993), pp.~160--174.

\bibitem{EM} {\sc A.~Edelman and Y.~Ma}, {\em Staircase failures explained by orthogonal versal forms},
SIAM J.~Matrix Anal.~Appl., 21 (2000), pp.~1004--1025.

\bibitem{EmaVD82}
{\sc A.~Emami-Naeini and P.~Van~Dooren}, {\em Computation of zeros of linear multivariable systems,}
Automatica 18 (1982), pp.~415--430.

\bibitem{Gan59}
{\sc F.R.~Gantmacher}, {\em Theory of Matrices. Volumes 1 and 2}, Chelsea, New York, 1959.


\bibitem{Guptri}
{\em Guptri}, software for singular pencils,
  \url{www8.cs.umu.se/research/nla/singular_pairs/guptri/}.

\bibitem{HKP05}
{\sc M.~E.~Hochstenbach, T.~Ko{\v{s}}ir, and B.~Plestenjak}, {\em A
  {J}acobi--{D}avidson type method for the nonsingular two-parameter eigenvalue
  problem}, SIAM J.~Matrix Anal.~Appl., 26 (2005), pp.~477--497.

\bibitem{HMP12}
{\sc M.~E.~Hochstenbach, A.~Muhi{\v{c}}, and B.~Plestenjak}, {\em On
  linearizations of the quadratic two-parameter eigenvalue problem}, Linear
  Algebra Appl., 436 (2012), pp.~2725--2743.


\bibitem{JHo09}
{\sc E.~Jarlebring and M.~E.~Hochstenbach}, {\em Polynomial two-parameter
  eigenvalue problems and matrix pencil methods for stability of
  delay-differential equations}, Linear Algebra Appl., 431 (2009),
  pp.~369--380.

\bibitem{JKM11}
{\sc E.~Jarlebring, S.~Kvaal, and W.~Michiels}, {\em Computing all pairs
  ($\lambda$,$\mu$) such that $\lambda$ is a double eigenvalue of
  {A}$+\mu${B}}, SIAM J.~Matrix Anal.~Appl., 32 (2011), pp.~902--927.

\bibitem{Kag00}
{\sc B.~K{\aa}gstr\"om}, {\em Singular matrix pencils}. Section 8.7 in: Z.~Bai,
J.~Demmel, J.~Dongarra, A.~Ruhe, and H.~van der Vorst (eds),
{\em Templates for the Solution of Algebraic Eigenvalue Problems: a practical guide},
pp.~260--277, SIAM, Philadelphia, 2000.

\bibitem{KunM06}
{\sc P.~Kunkel and V.~Mehrmann}, {\em Differential-Algebraic Equations: Analysis and Numerical Solution},
EMS Publishing House, Z\"urich, Switzerland, 2006.

\bibitem{LauM78}
{\sc A.J.~Laub and B.C.~Moore}, {\em Calculation of transmission
zeros using $QZ$ techniques}, Automatica, 14 (1978), pp.~557--566.


\bibitem{MacMMM13}
{\sc D.~S.~Mackey, N.~Mackey, C.~Mehl, and V.~Mehrmann},
{\em Skew-symmetric matrix polynomials and their Smith forms},
Linear Algebra Appl., 438 (2013), pp.~4625--4653.

\bibitem{MacMMM15}
\leavevmode\vrule height 2pt depth -1.6pt width 23pt,
{\em M\"obius transformations of matrix polynomials},
Linear Algebra Appl., 470 (2015), pp.~120--184.

\bibitem{MMW15}
{\sc C.~Mehl, V.~Mehrmann, and M.~Wojtylak}, {\em On the distance to
  singularity via low rank perturbations}, Operators and Matrices, 9 (2015), pp.~733--772.

\bibitem{MehMW16}
\leavevmode\vrule height 2pt depth -1.6pt width 23pt,
{\em Parameter-dependent rank-one perturbations
of singular Hermitian or symmetric pencils},
SIAM J.~Matrix Anal.~Appl., 38 (2017), pp.~72--95.

\bibitem{MehMW18}
\leavevmode\vrule height 2pt depth -1.6pt width 23pt,
{\em Linear algebra properties of dissipative Hamiltonian descriptor systems},
SIAM J.~Matrix Anal.~Appl., 39 (2018), pp.~1489--1519.

\bibitem{Meh18}
{\sc V.~Mehrmann}, private communication, 2018.

\bibitem{MPl09}
{\sc A.~Muhi{\v{c}} and B.~Plestenjak}, {\em On the singular two-parameter
  eigenvalue problem}, Electron.~J.~Linear Algebra, 18 (2009), pp.~420--437.

\bibitem{MPl10}
\leavevmode\vrule height 2pt depth -1.6pt width 23pt, {\em On the quadratic
  two-parameter eigenvalue problem and its linearization}, Linear Algebra
  Appl., 432 (2010), pp.~2529--2542.

\bibitem{MPl14}
\leavevmode\vrule height 2pt depth -1.6pt width 23pt, {\em A method for
  computing all values $\lambda$ such that {A}$+\lambda${B} has a multiple
  eigenvalue}, Linear Algebra Appl., 440 (2014), pp.~345--359.

\bibitem{MCS}
{\em MCS Toolbox}. The Matrix Canonical Structure Toolbox for Matlab,
\url{www.cs.umu.se/english/research/groups/matrix-computations/stratigraph}.

\bibitem{MultiParEig}
{\em MultiParEig}. Toolbox for multiparameter eigenvalue problems,
\url{www.mathworks.com/matlabcentral/fileexchange/47844-multipareig}.

\bibitem{Advanpix}
{\em Multiprecision Computing Toolbox}. Advanpix, Tokyo. \url{www.advanpix.com}.

\bibitem{PHo16}
{\sc B.~Plestenjak and M.~E.~Hochstenbach}, {\em Roots of bivariate polynomial
  systems via determinantal representations}, SIAM J.~Sci.~Comput., 38 (2016),
  pp.~A765--A788.

\bibitem{Val12}
{\sc N.~Valeev}, {\em On a spectral property of irregular pencils}, Ufa
  Mathematical Journal, 4 (2012), pp.~44--52.

\bibitem{Val13}
\leavevmode\vrule height 2pt depth -1.6pt width 23pt, {\em On quasiregular
  spectrum of matrix pencils}, Doklady Mathematics, vol.~88, Springer, 2013,
  pp.~545--547.

\bibitem{VD79}
{\sc P.~Van~Dooren}, {\em The computation of Kronecker's canonical form of a singular pencil},
Linear Algebra Appl., 27 (1979), pp.~103--140.

\bibitem{VD83}
\leavevmode\vrule height 2pt depth -1.6pt width 23pt, {\em Reducing subspaces: Definitions, properties and algorithms}. In:
B.~K{\aa}gstr\"om, A.~Ruhe (eds.), Matrix Pencils. Lecture Notes in Mathematics, vol.~973. Springer,
Berlin, Heidelberg, 1983, pp.~58--73.

\bibitem{WilAEP}
{\sc J.H.~Wilkinson}, {\em The Algebraic Eigenvalue Problem},
Oxford University Press, Oxford, 1965.

\bibitem{Wil79}
\leavevmode\vrule height 2pt depth -1.6pt width 23pt,
 {\em Kronecker's canonical form and the $QZ$ algorithm},
Linear Algebra Appl., 28 (1979), pp.~285--303.

\end{thebibliography}
\end{document}